\newtheorem{thm}{\bf{Theorem}}[section]
\newtheorem{lem}[thm]{\bf{Lemma}}
\newtheorem{df}[thm]{\bf{Definition}}
\newtheorem{cor}[thm]{\bf{Corollary}}
\newtheorem{rem}[thm]{\bf{Remark}}
\newtheorem{prop}[thm]{\bf{Proposition}}
\newtheorem{fact}[thm]{\bf{Fact}}
\newtheorem{ex}[thm]{\bf{Example}}
\numberwithin{equation}{section}
\newcommand{\dom}{\operatorname{dom}}
\newcommand{\ran}{\operatorname{ran}}
\newcommand{\intt}{\operatorname{int}}
\newcommand{\R}{\operatorname{\mathbb{R}}}
\newcommand{\OR}{\operatorname{\overline{\R}}}
\newcommand{\Prox}{\operatorname{Prox}}
\title{Conditions for the existence, identification and calculus rules of the threshold of prox-boundedness}
\author{C. Planiden\thanks{School of Mathematics and Applied Statistics, University of Wollongong, Wollongong, NSW, 2522, Australia. chayne@uow.edu.au}}
\date{\today}
\begin{document}

\maketitle\author
\setcounter{page}{1}\pagenumbering{arabic}

\begin{abstract}
This work advances knowledge of the threshold of prox-boundedness of a function; an important concern in the use of proximal point optimization algorithms and in determining the existence of the Moreau envelope of the function. In finite dimensions, we study general prox-bounded functions and then focus on some useful classes such as piecewise functions and Lipschitz continuous functions. The thresholds are explicitly determined when possible and bounds are established otherwise. Some calculus rules are constructed; we consider functions with known thresholds and find the thresholds of their sum and composition.
\end{abstract}

\noindent \textbf{2000 Mathematics Subject Classification:} \medskip\\
Primary 49J53; Secondary 26A06, 90C30\medskip\\
\textbf{Keywords:} Fenchel conjugate, infimal convolution, Lipschitz continuous, Moreau envelope, Moreau--Yosida regularization, piecewise function, prox-bounded, proximal mapping, regularization, threshold.

\section{Introduction}\label{sec:intro}

The Moreau envelope function, also known as Moreau--Yosida regularization, is a particular infimal convolution that first came about in the 1960s \cite{proximite}. Given a function $f$ on a finite-dimensional space, the Moreau envelope of $f$ employs a nonnegative parameter $r$ and is denoted $e_rf$:
\begin{equation}\label{eq0}e_rf(x)=\inf\limits_{y\in\R^n}\left\{f(y)+\frac{r}{2}\|y-x\|^2\right\}.\end{equation}It is a well-established, regularizing function that has many desirable properties when $f$ has reasonable structure \cite{rockwets,funcanal}:
\begin{itemize}
\item if $f$ is convex nonsmooth, $e_rf$ is convex smooth and there is an explicit formula for the gradient $\nabla e_rf$;
\item the functions $f$ and $e_rf$ have the same minimum and minimizers when $f$ is convex;
\item as $r\to\infty$, $e_rf\to f$.
\end{itemize}The set of all solution points to \eqref{eq0} is known as the \emph{proximal mapping} of $f$, denoted by $\Prox_f$. The proximal mapping is a key component of many optimization algorithms, such as the proximal point method and its variants \cite{proxpoint,MR1735448,MR1168183,MR628084,hare2018computing,hare2019derivative,monops}. Because of the above and other nice features, the Moreau envelope and proximal mapping have been thoroughly researched and applied to many situations in the convex \cite{MR3014983,ontheconv,MR3143754,martreg,MR3513870,MR3931109} and nonconvex \cite{compprox,ppm,MR3392363,fastmoreau,vusmoothness,MR3783644,ppa} settings.\par For a particular function $f$, its Moreau envelope may or may not exist, or may exist only for certain $x$ and/or certain $r$. If there does exist a point $x$ such that $e_rf(x)\in\R$ for some $r\geq0,$ we say that $f$ is \emph{prox-bounded}. If a function is not prox-bounded, then its Moreau envelope does not exist anywhere, for any choice of $r$. It is the parameter $r$ that is of primary interest in this work. There are many theoretically proved-convergent proximal algorithms (see \cite{annergren2012admm,MR3935085,briceno2011proximal,calamai1987projected,MR3921395,douglas1956numerical} and the references therein), but in practice, it has been observed that the initial choice of $r$ and the manner of adjusting it as the algorithm runs are of critical importance, in order to obtain reliable performance \cite{boyd2011distributed,fougner2018parameter,MR2275356,he2000alternating,MR3853209,MR1928047}. We explore the \emph{threshold of prox-boundedness} of $f$: the infimum of the set of $r\geq0$ such that $e_rf$ exists at at least one point.\par In \cite{hare2014thresholds}, the class of piecewise linear-quadratic (PLQ) functions on $\R^n$ was examined in the context of prox-boundedness. The main result of \cite{hare2014thresholds} is a theorem that explicitly identifies the threshold and the domain of the Moreau envelope of any finite-dimensional PLQ function. In that setting, the threshold is $\max r_i,$ where $r_i$ is the threshold of $f_i$ for each $i$. One of the aims of the present work is to generalize that result in two aspects. We consider the cases where
\begin{enumerate}
\item[(1)]the functions $f_i$ are not necessarily linear nor quadratic and
\item[(2)]the domains $\dom f_i$ are not necessarily polyhedral. 
\end{enumerate}The main question on which we focus is this: what are the minimal conditions needed on $f_i$ in order to be sure that the threshold of the piecewise function is $\max r_i?$ We establish bounds and illustrate several counterexamples for functions with conditions that one might suspect sufficient to guarantee prox-boundedness, but are not. Under certain conditions, the threshold can be determined exactly. \par The second focus of this work is the establishing of calculus rules for thresholds of prox-bounded functions. We explore classes of functions with known thresholds and study the conditions needed to determine the threshold of their sum and composition, or to produce an upper bound when an exact result cannot be found. By making use of the Fenchel conjugate representation of the Moreau envelope and some other previously-established properties and characterizations of prox-bounded functions, we determine sufficient conditions for the existence of a sum rule and a composition rule for thresholds of prox-boundedness. \par The remainder of this work is organized as follows. Section \ref{sec:prelim} presents the notation used throughout and several definitions and known facts regarding prox-bounded functions and their thresholds. In Section \ref{sec:main}, we examine the family of piecewise functions and determine the minimal conditions needed for finding the threshold. An example of what can go wrong when these conditions are not met is provided. Section \ref{sec:calculus} is dedicated to forming calculus rules for the threshold of the sum and the composition of prox-bounded functions. Section \ref{sec:conc} offers concluding remarks and suggests interesting areas of further research in this vein.

\section{Preliminaries}\label{sec:prelim}

\subsection{Notation}

Throughout this paper, we work in finite-dimensional space $\R^n,$ endowed with inner product $\langle\cdot,\cdot\rangle$ and induced norm $\|\cdot\|.$ The set $\R^n\cup\{+\infty\}$ is denoted by $\OR$. We generally conform to the notation used in \cite{rockwets}, including the terms proper and lower semicontinuous (lsc) defined therein. We denote the domain of a function $f$ by $\dom f$ and the gradient of $f$ by $\nabla f.$ The distance from a point $x\in\R^n$ to a set $C$ is denoted $d_C(x),$ and the projection of $x$ onto $C$ is denoted $P_Cx$. 
\begin{df}
Given $K>0$, the function $f:\R^n\to\R$ is \emph{locally $K$-Lipschitz continuous} about $z\in\dom f$ with radius $\sigma > 0$ if
$$\|f(y)-f(x)\|\leq K\|y-x\|\mbox{ for all }x,y\in B_\sigma(z),$$
and $f$ is \emph{globally $K$-Lipschitz continuous} if $\sigma$ can be taken to be $\infty$.
\end{df}
\begin{df}
The \emph{indicator function} of a set $S$ is denoted $\iota_S$ and defined by
$$\iota_s(x)=\begin{cases}
0,&x\in S,\\\infty,&x\not\in S.
\end{cases}$$
\end{df}
\begin{df}\label{df:piecewise}
For proper functions $f_i:\R^n\rightarrow\OR,$ $i\in\{1,2,\ldots,m\},$ the \emph{piecewise function} $f:\R^n\rightarrow\OR$ is defined by
$$f(x)=\begin{cases}
f_1(x),&x\in S_1,\\
&\vdots\\
f_m(x),&x\in S_m,
\end{cases}$$
where $\bigcup_iS_i=\R^n$ and $S_i\cap\intt S_j=\varnothing$ for $i\neq j.$
\end{df}
\noindent Notice that a piecewise function is not necessarily continuous, as we do not require $f_i(x)=f_j(x)$ on $S_i\cap S_j.$
\begin{df}
Given a finite number of functions $f_1,f_2,\ldots,f_m:\R^n\rightarrow\OR,$ the \emph{finite-max function} $f$ is defined by
$$f(x)=\max\{f_i(x)\}.$$The \emph{active set} of indices for $f$ at a point $x$ is the set
$$A_x=\{i:f_i(x)=f(x)\}.$$
\end{df}
\noindent Note that a finite-max function is a piecewise function with $S_i=\{x:i\in A_x\}.$
\begin{df}
A function $f:\R^n\rightarrow\OR$ is \emph{prox-bounded} if there exists $r\geq0$ such that $e_rf(x)>-\infty$ for some $x\in\R^n.$ The infimum of all such $r$ is called the \emph{threshold of prox-boundedness} of $f.$
\end{df}
Our interest in this work is to identify thresholds of prox-boundedness of functions and the conditions for their existence. To that end, we list the following results from previous works.
\begin{fact}\emph{\cite[Lemma 2.4]{hare2014thresholds}}\label{fact1}
Let $f:\R^n\rightarrow\OR$ be proper and lsc. Then $f$ is bounded below if and only if its threshold $\bar{r}=0$ and $\dom e_{\bar{r}}f=\R^n.$
\end{fact}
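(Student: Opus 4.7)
The plan is to reduce the entire statement to the trivial identity $e_0f(x)=\inf_{y\in\R^n}f(y)$, which is independent of $x$. Both directions of the equivalence then fall out by reading off when this common value lies in $\R$.

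First I would prove the forward implication. Assuming $f$ is bounded below by some $M\in\R$, I would observe that for every $x$ the quantity $e_0f(x)=\inf_y f(y)$ satisfies $e_0f(x)\geq M>-\infty$, while properness of $f$ forces $\inf f<+\infty$. Hence $e_0f$ is finite at every $x\in\R^n$. This simultaneously shows that $r=0$ is admissible in the definition of the threshold (and, since the threshold is non-negative, forces $\bar{r}=0$) and that $\dom e_{\bar{r}}f=\R^n$.

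For the converse, I would assume $\bar{r}=0$ together with $\dom e_{\bar{r}}f=\R^n$. The domain hypothesis gives $e_0f(x)>-\infty$ for every $x$, and unpacking $e_0f(x)=\inf_y f(y)$ directly yields $\inf_y f(y)>-\infty$, which is exactly the definition of $f$ being bounded below.

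I do not expect a genuine obstacle here: the argument is essentially a one-line consequence of the definition once one notices that the quadratic penalty $\tfrac{r}{2}\|y-x\|^2$ vanishes at $r=0$. The only subtle point worth flagging is \emph{why} the statement explicitly couples $\bar{r}=0$ with $\dom e_{\bar{r}}f=\R^n$: since the threshold is defined as an infimum, $\bar{r}=0$ on its own would not guarantee that $e_0f$ is finite anywhere (it could conceivably require $r>0$), and the domain assumption supplies exactly the missing piece.
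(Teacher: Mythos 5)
Your argument is correct. The paper itself does not prove this statement --- it imports it verbatim as \cite[Lemma~2.4]{hare2014thresholds} --- so there is no in-paper proof to compare against, but your reduction to the identity $e_0f(x)=\inf_{y}f(y)$ (constant in $x$) is exactly the right observation, and both directions follow as you describe. Your closing remark is also on point: $\bar{r}=0$ alone does not imply boundedness below (e.g.\ $f(x)=-\|x\|^{3/2}$ has threshold $0$ yet $e_0f\equiv-\infty$), so the hypothesis $\dom e_{\bar{r}}f=\R^n$ is genuinely needed in the converse; note that since $e_0f$ is constant in $x$, its domain is either empty or all of $\R^n$, so ``nonempty'' would already suffice there. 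The lsc assumption plays no role in this particular equivalence.
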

\begin{fact}\emph{\cite[Theorem 1.25]{rockwets}}\label{fact2}
Let $f:\R^n\to\R$ be proper, lsc and prox-bounded with threshold $\bar{r}$. Then for all $r>\bar{r}$, $\dom e_rf=\R^n$.
\end{fact}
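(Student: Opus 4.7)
The plan is to exploit the definition of the threshold to find a parameter strictly between $\bar r$ and $r$ where the Moreau envelope is already finite somewhere, and then use the strict inequality $r>\bar r$ to absorb the resulting quadratic into the infimum defining $e_rf(x)$ at an arbitrary $x$.

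First I would pick any $r'\in(\bar r,r)$. By the definition of the threshold of prox-boundedness, $r'$ is not a lower bound for the set of parameters at which $f$ has a finite Moreau value, so there exists $x_0\in\R^n$ with $e_{r'}f(x_0)>-\infty$. This yields the pointwise minorant
\[f(y)\ \geq\ e_{r'}f(x_0)-\frac{r'}{2}\|y-x_0\|^2\quad\text{for every }y\in\R^n.\]

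Next I would fix an arbitrary $x\in\R^n$ and plug this minorant into the definition \eqref{eq0} of $e_rf(x)$:
\[f(y)+\frac{r}{2}\|y-x\|^2\ \geq\ e_{r'}f(x_0)+\frac{r}{2}\|y-x\|^2-\frac{r'}{2}\|y-x_0\|^2.\]
Expanding the two squared norms, the $y$-dependent part of the right-hand side becomes
\[\frac{r-r'}{2}\|y\|^2-\langle y,\,rx-r'x_0\rangle+\text{const}(x,x_0),\]
which, because $r>r'>0$, is a coercive quadratic in $y$ and is therefore bounded below on all of $\R^n$ (its infimum is attained at $y^*=(rx-r'x_0)/(r-r')$). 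Taking the infimum over $y$ then gives $e_rf(x)>-\infty$. For the upper bound, properness of $f$ supplies some $y_0\in\dom f$, and the candidate value $f(y_0)+\tfrac{r}{2}\|y_0-x\|^2$ is finite, so $e_rf(x)<\infty$.

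The only delicate point is checking that $r-r'>0$ is what allows the cross term $-\tfrac{r'}{2}\|y-x_0\|^2$ coming from the minorant to be dominated by $\tfrac{r}{2}\|y-x\|^2$; this is exactly where the strict inequality $r>\bar r$ (and the ability to insert an intermediate $r'$) is essential — it is the main obstacle, but it dissolves immediately once the intermediate parameter is chosen. Since $x$ was arbitrary, we conclude $\dom e_rf=\R^n$.
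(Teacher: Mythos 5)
Your argument is correct. The paper offers no proof of this statement---it is quoted as a fact with a citation to \cite[Theorem 1.25]{rockwets}---and your quadratic-minorant argument (insert an intermediate parameter $r'\in(\bar r,r)$, extract the global minorant $f(\cdot)\geq e_{r'}f(x_0)-\tfrac{r'}{2}\|\cdot-x_0\|^2$ from finiteness of $e_{r'}f$ at one point, and note that the surplus $\tfrac{r-r'}{2}\|y\|^2$ makes the resulting quadratic in $y$ coercive, while properness gives the upper bound) is precisely the standard proof of that theorem. The only step worth spelling out more carefully is the first one: $r'>\bar r=\inf R$ directly gives only some $r''<r'$ for which $e_{r''}f$ is finite somewhere, and you then need the trivial monotonicity $e_{r'}f\geq e_{r''}f$ to conclude $e_{r'}f(x_0)>-\infty$.
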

\begin{fact}\emph{\cite[Example 3.28]{rockwets}}\label{fact3}
Let $f:\R^n\to\R$ be such that $$\liminf\limits_{\|x\|\to\infty}\frac{f(x)}{\|x\|}>-\infty.$$Then $f$ is prox-bounded with threshold $\bar{r}=0$.
\end{fact}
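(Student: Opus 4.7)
The plan is to verify the defining condition for $\bar r = 0$ directly: fix an arbitrary $r > 0$ and exhibit a single point $x\in\R^n$ at which $e_rf(x) > -\infty$. Letting $r\downarrow 0$ then forces $\bar r = 0$.

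First I would translate the $\liminf$ hypothesis into a global affine lower bound for $f$. Pick any real number $c$ strictly less than $\liminf_{\|y\|\to\infty} f(y)/\|y\|$; by definition of $\liminf$, there exists $R > 0$ such that $f(y) \geq c\|y\|$ whenever $\|y\| > R$. Under the standing lower semicontinuity hypothesis implicit in this context (without which a function $\R^n\to\R$ need not be bounded below on compacta and the conclusion would fail), $f$ attains a finite minimum $m$ on the compact ball $\{y:\|y\|\leq R\}$. Combining the two inequalities and absorbing constants produces a global affine minorant
$$f(y) \;\geq\; c\|y\| + b \quad \text{for all } y\in\R^n,$$
for some $b\in\R$.

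Next I would use this minorant to control the Moreau envelope. For the chosen $r > 0$ and, say, $x = 0$,
$$f(y) + \tfrac{r}{2}\|y\|^2 \;\geq\; \tfrac{r}{2}\|y\|^2 + c\|y\| + b,$$
and the right-hand side is a quadratic in $\|y\|$ with positive leading coefficient $r/2$, hence bounded below on $\R^n$ by a simple completing-the-square argument. Infimizing over $y$ gives $e_rf(0) > -\infty$, so $\bar r \leq r$; since $r > 0$ was arbitrary, $\bar r = 0$.

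The entire argument is soft, and the only step meriting attention is the first one: promoting the asymptotic growth estimate $f(y)\geq c\|y\|$ outside a ball to a genuine global affine minorant, which is where the lower-semicontinuity hypothesis (ensuring $f$ is bounded below on the compact remainder) does all the real work. After that, quadratic dominance of the Moreau penalty over any affine term is automatic, so no genuine obstacle remains.
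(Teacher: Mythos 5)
Your argument is correct, but note that the paper offers no proof of this statement at all: it is imported verbatim as a Fact from \cite[Example 3.28]{rockwets}, so there is nothing internal to compare against. Your route --- extract $c$ with $f(y)\geq c\|y\|$ outside a ball, bound $f$ below on the compact remainder, and then observe that $\tfrac{r}{2}t^2+ct+b$ is bounded below in $t=\|y\|\geq 0$ for every $r>0$ --- is the standard one and is sound. Two remarks. First, your insistence on lower semicontinuity is not pedantry but an actual repair of the statement as literally written: a finite-valued but non-lsc $f$ such as $f(y)=-1/\|y\|$ for $y\neq 0$, $f(0)=0$, satisfies $\liminf_{\|x\|\to\infty}f(x)/\|x\|=0$ yet has $e_rf\equiv-\infty$, so some such hypothesis (lsc, or local boundedness below) must be in force; in Rockafellar--Wets it is part of the standing assumptions. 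Second, a shorter path is available entirely inside the paper's own toolkit: a linear lower bound at infinity gives $\liminf_{\|x\|\to\infty}f(x)/\|x\|^2\geq 0$, and Fact \ref{proxfact1}(iv) together with its postamble ($\bar{r}=\max\{0,\hat{r}\}$ with $-\hat{r}/2$ equal to that limit) immediately yields $\bar{r}=0$. Your direct completing-the-square argument buys self-containedness at the cost of a few extra lines; either is acceptable.
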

\begin{fact}\emph{\cite[Theorem 2.26]{rockwets}}\label{fact4}
Let $f:\R^n\to\R$ be proper, lsc and convex. Then $f$ is prox-bounded with threshold $\bar{r}=0$.
\end{fact}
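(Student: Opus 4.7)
My plan is to reduce Fact 4 to Fact 3 by showing that every proper lsc convex function admits a continuous affine minorant. Concretely, I would establish that there exist $a\in\R^n$ and $b\in\R$ such that
$$f(x)\geq \langle a,x\rangle + b\quad\text{for all }x\in\R^n,$$
which would immediately give
$$\liminf_{\|x\|\to\infty}\frac{f(x)}{\|x\|}\geq \liminf_{\|x\|\to\infty}\frac{\langle a,x\rangle + b}{\|x\|}\geq -\|a\|>-\infty,$$
so that Fact \ref{fact3} applies and yields prox-boundedness with threshold $\bar{r}=0$.

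The main step is producing the affine minorant. I would argue as follows. Since $f$ is proper, choose $x_0\in\dom f$ and any scalar $\alpha<f(x_0)$, so that the point $(x_0,\alpha)$ lies outside $\epi f$. Because $f$ is lsc and convex, $\epi f$ is a nonempty closed convex subset of $\R^n\times\R$, so by a standard separation argument (Hahn--Banach in finite dimensions, or direct projection onto $\epi f$) there exist $(v,\beta)\in\R^n\times\R$ and $\gamma\in\R$ with
$$\langle v,y\rangle + \beta t \geq \gamma > \langle v,x_0\rangle + \beta\alpha\quad\text{for every }(y,t)\in\epi f.$$
Since $(x_0, f(x_0)+s)\in\epi f$ for all $s\geq 0$, letting $s\to\infty$ forces $\beta\geq 0$; and $\beta=0$ is ruled out because that would force $\langle v,x_0\rangle\geq\gamma > \langle v,x_0\rangle$. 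After dividing through by $\beta>0$ and rearranging, I obtain the desired affine minorant $f(y)\geq \langle a,y\rangle + b$ with $a=-v/\beta$ and $b=\gamma/\beta$.

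With the minorant in hand, the remainder is automatic: the liminf estimate above is immediate, and Fact \ref{fact3} closes the argument. The subtle point, and the only real obstacle, is the separation step, which implicitly uses the completeness of $\R^n$ and the closedness of $\epi f$ coming from lower semicontinuity; without lsc the epigraph might fail to be closed and the separating hyperplane could be vertical, and without properness there is no interior point $(x_0,\alpha)$ to separate. Once those are in place, the route from convexity to threshold zero is essentially a one-step reduction to Fact \ref{fact3}.
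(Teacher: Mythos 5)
Your argument is correct: the affine-minorant construction via strict separation of $(x_0,\alpha)$ from the closed convex epigraph is sound, the case $\beta=0$ is properly excluded, and the resulting bound $\liminf_{\|x\|\to\infty} f(x)/\|x\|\geq-\|a\|$ does reduce the claim to Fact~\ref{fact3}. The paper itself offers no proof of this statement (it is quoted directly from \cite[Theorem 2.26]{rockwets}), and your route is essentially the standard one used in that source, so there is nothing further to reconcile.
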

\noindent Note that by definition of prox-regular, if $r<\bar{r}$, then $e_rf(x)=-\infty$ for all $x\in\R^n$. By Fact \ref{fact2}, if $r>\bar{r}$, then $e_rf(x)>-\infty$ for all $x\in\R^n$. At the threshold itself, however, there is no such universal behaviour of the Moreau envelope. Depending on the nature of $f$, $\dom e_{\bar{r}}f$ can be empty, full-domain or a proper subset of $\R^n$, even for very simple functions (see \cite[Examples 2.5--2.7]{hare2014thresholds}). This is partly why the choice of initial prox-parameter $r$ and the manner in which it changes are so crucial in many minimization algorithms; vastly different proximal behaviour is possible with distinct values of $r$. In the next section, we explore existence conditions for the threshold of prox-boundedness of piecewise functions.

\section{Existence of thresholds of prox-boundedness}\label{sec:main}

The conditions for prox-boundedness in the case of PLQ functions was thoroughly examined in \cite{hare2014thresholds}. Now we move beyond that class of functions. We concentrate primarily on piecewise functions as defined in Definition \ref{df:piecewise}, as that is a natural extension to what has been accomplished already. We begin by establishing the fact that Lipschitz continuous functions are prox-bounded.
\begin{prop}\label{prop0}
Let $f:\R^n\to\R$ be proper and globally $K$-Lipschitz. Then $f$ is prox-bounded with threshold $\bar{r}=0$.
\end{prop}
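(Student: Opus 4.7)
The plan is to reduce the claim to Fact \ref{fact3} by producing a linear lower bound on $f$ that forces the liminf of $f(x)/\|x\|$ to be finite. The global Lipschitz property provides exactly such a bound, so the argument is short.

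First, I would fix any reference point, say the origin, and use global $K$-Lipschitz continuity to write $|f(x)-f(0)|\leq K\|x\|$ for every $x\in\R^n$. Rearranging gives the pointwise estimate $f(x)\geq f(0)-K\|x\|$, which holds on all of $\R^n$ since $f$ is real-valued (so in particular $f(0)\in\R$).

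Next, for $\|x\|\neq 0$ I would divide by $\|x\|$ to obtain
\begin{equation*}
\frac{f(x)}{\|x\|}\geq \frac{f(0)}{\|x\|}-K.
\end{equation*}
Taking $\liminf$ as $\|x\|\to\infty$, the first term vanishes and we conclude
\begin{equation*}
\liminf_{\|x\|\to\infty}\frac{f(x)}{\|x\|}\geq -K>-\infty.
\end{equation*}

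Finally, I would invoke Fact \ref{fact3} directly on this inequality to conclude that $f$ is prox-bounded with threshold $\bar r=0$. There is no real obstacle here: the only thing to double-check is that $f$ is everywhere finite-valued (so that the Lipschitz inequality may be applied from the fixed base point $0$ to every $x$), which is built into the hypothesis $f:\R^n\to\R$. Properness of $f$ is automatic from the same hypothesis and is only needed to match the format of Fact \ref{fact3}.
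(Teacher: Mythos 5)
Your proof is correct, but it takes a genuinely different route from the paper's. You observe that global $K$-Lipschitz continuity yields the linear minorant $f(x)\geq f(0)-K\|x\|$, hence $\liminf_{\|x\|\to\infty}f(x)/\|x\|\geq -K>-\infty$, and then you invoke Fact \ref{fact3} to conclude immediately. The paper instead argues by contradiction: it assumes $e_rf\equiv-\infty$, extracts a minimizing sequence $\{x_\alpha\}$ with $f(x_\alpha)+\frac{r}{2}\|\bar x-x_\alpha\|^2\to-\infty$, deduces via the Lipschitz bound that $\|\bar x-x_\alpha\|\to\infty$, and then reasons informally that $f(x_\alpha)$ must decay ``faster'' than the quadratic term grows, contradicting Lipschitz continuity. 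Your argument buys both brevity and rigor: the paper's final step (comparing growth rates and asserting that $f$ ``is not Lipschitz either'') is left at the level of intuition, whereas your linear-minorant estimate is a two-line computation that discharges the claim completely via a fact already recorded in the preliminaries. The only cost is the reliance on Fact \ref{fact3}, which is an external citation rather than a from-scratch argument, but since the paper itself states that fact in Section \ref{sec:prelim}, this is no real cost at all.
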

\begin{proof}
Suppose that $f$ is not prox-bounded, i.e.\ $e_rf(x)=-\infty$ for all $r\geq0$, for all $x\in\R^n$. Let $r>0$ be fixed and arbitrary. Then for any $\bar{x}\in\R^n$, there exists a sequence $\{x_\alpha\}_{\alpha=1}^\infty\subseteq\R^n$ such that 
\begin{equation}\label{eq1}\lim\limits_{\alpha\to\infty}\left\{f(x_\alpha)+\frac{r}{2}\|\bar{x}-x_\alpha\|^2\right\}=-\infty.\end{equation}Then $\lim_{\alpha\to\infty}f(x_\alpha)=-\infty$, since the other term $\frac{r}{2}\|\bar{x}-x_\alpha\|^2$ is always nonnegative. Hence,$$\lim_{\alpha\to\infty}|f(\bar{x})-f(x_\alpha)|=\infty\quad(f \mbox{ is proper, so }f(\bar{x})\neq-\infty),$$and since $f$ is $K$-Lipschitz, we have$$\lim\limits_{\alpha\to\infty}K\|\bar{x}-x_\alpha\|=\infty.$$Thus, $\lim_{\alpha\to\infty}\frac{r}{2}\|\bar{x}-x_\alpha\|^2=\infty$. This together with \eqref{eq1} says that as $\alpha\to\infty$, $f(x_\alpha)\to-\infty$ faster than $\frac{r}{2}\|\bar{x}-x_\alpha\|^2\to\infty$, i.e., $\frac{r}{2}\|\bar{x}-x_\alpha\|^2=o(f(x_\alpha))$. Since $\frac{r}{2}\|\bar{x}-x_\alpha\|^2$ is not a Lipschitz continuous function and $f(x_\alpha)$ grows even faster, we have that $f(x_\alpha)$ is not Lipschitz either, a contradiction. Therefore, there must exist at least one $\bar{x}\in\R^n$ such that $e_rf(\bar{x})>-\infty$ and we have that $f$ is prox-bounded. Since this is true for any arbitrary $r>0$, it is true for all $r>0$. The threshold of prox-boundedness is the infimum of all such $r$, so $\bar{r}=0$.
\end{proof}
Now we focus on the family of piecewise functions and say what we can about their thresholds. Henceforth, any mention of a piecewise function refers to a function defined as in Definition \ref{df:piecewise}. 
\begin{prop}\label{prop1}
Let $f:\R^n\to\OR$ be a proper, lsc, piecewise function. Then $f$ is prox-bounded if and only if $f_i+\iota_{S_i}$ is prox-bounded for each $i.$
\end{prop}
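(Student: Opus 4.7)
My plan rests on the convenient reformulation that a function $h:\R^n\to\OR$ is prox-bounded exactly when there exist $r\geq 0$ and $c\in\R$ with $h(y)+\frac{r}{2}\|y\|^2\geq c$ for every $y\in\R^n$; this is just the statement $e_rh(0)>-\infty$, and by Fact \ref{fact2} as soon as $r$ exceeds the threshold this bound is automatic. Once both directions of the proposition are phrased in this ``bounded below by a quadratic'' form, the defining piecewise identity $f(y)=f_i(y)$ for $y\in S_i$ does the rest.

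For the direction $(\Leftarrow)$ I would let each $g_i:=f_i+\iota_{S_i}$ have threshold $r_i$ and pick any $r>\max_i r_i$. Fact \ref{fact2} applied to each $g_i$ at the origin gives constants $c_i\in\R$ with $f_i(y)+\frac{r}{2}\|y\|^2\geq c_i$ whenever $y\in S_i$. Since the $S_i$ cover $\R^n$ and $f$ agrees with $f_i$ on $S_i$, every $y$ inherits the bound $f(y)+\frac{r}{2}\|y\|^2\geq\min_j c_j$, so $f+\frac{r}{2}\|\cdot\|^2$ is bounded below and $f$ is prox-bounded.

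For the direction $(\Rightarrow)$, starting from $f$ prox-bounded with threshold $\bar r$, I would pick $r>\bar r$ and invoke Fact \ref{fact2} to obtain $c\in\R$ with $f(y)+\frac{r}{2}\|y\|^2\geq c$ for every $y$. Fix $i$ and split on membership in $S_i$: inside $S_i$ the piecewise identity gives $g_i(y)=f_i(y)=f(y)$, so the bound is inherited; outside $S_i$, $g_i(y)=+\infty$ and the bound is automatic. Thus $e_rg_i(0)\geq c$ and $g_i$ is prox-bounded.

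The main obstacle I anticipate is the behaviour at overlap points $y\in S_i\cap S_j$ with $i\neq j$: the short argument above reads Definition \ref{df:piecewise} literally, treating $f_i(y)=f(y)$ for all $y\in S_i$. If the piecewise definition is allowed to select a different piece at such boundary points, lsc of $f$ combined with approach from $\intt S_i$ becomes the natural tool to recover the inequality $f_i(y)\geq f(y)$ on the boundary, which is all that is needed to keep the quadratic bound on $S_i$ and push both implications through unchanged.
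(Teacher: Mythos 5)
Your proposal is correct and is essentially the paper's own argument in different clothing: writing prox-boundedness of each $f_i+\iota_{S_i}$ as a common quadratic lower bound $f_i(y)+\tfrac{r}{2}\|y\|^2\geq c_i$ on $S_i$ and taking $\min_j c_j$ over the cover is exactly the paper's decomposition \eqref{pb1} of $e_rf$ into a minimum of infima over the pieces, evaluated at $\bar{x}=0$. The overlap caveat you flag is no obstacle beyond what the paper itself incurs, since \eqref{pb1} likewise reads Definition \ref{df:piecewise} as giving $f=f_i$ on all of $S_i$.
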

\begin{proof}
\begin{itemize}
\item[$(\Rightarrow)$] Let $f$ be prox-bounded with threshold $\bar{r}.$ Suppose that there exists $j$ such that $f_j+\iota_{S_j}$ is not prox-bounded. Fix $r>\bar{r},$ so that $e_rf(x)\in\R$ for all $x$ (Fact \ref{fact1}). By definition of $e_rf,$ for $\bar{x}$ fixed we have
\footnotesize\begin{align}
e_rf(\bar{x})&=\inf\limits_y\left\{f(y)+\frac{r}{2}\|y-\bar{x}\|^2\right\}\nonumber\\
&=\min\left[\inf\limits_y\left\{f_1(y)+\iota_{S_1}(y)+\frac{r}{2}\|y-\bar{x}\|^2\right\},\ldots,\inf\limits_y\left\{f_m(y)+\iota_{S_m}(y)+\frac{r}{2}\|y-\bar{x}\|^2\right\}\right].\label{pb1}
\end{align}\normalsize
Since $f_j+\iota_{s_j}$ is not prox-bounded, we have 
$$\inf\limits_y\left\{f_j(y)+\iota_{S_j}(y)+\frac{r}{2}\|y-\bar{x}\|^2\right\}=-\infty,$$
hence $e_rf(\bar{x})=-\infty.$ This is a contradiction to the fact that $e_r(\bar{x})\in\R.$ Therefore, $f_i+\iota_{S_i}$ is prox-bounded for all $i\in\{1,2,\ldots,m\}.$
\item[$(\Leftarrow)$] Let $f_i+\iota_{S_i}$ be prox-bounded with threshold $r_i$ for each $i\in\{1,2,\ldots,m\}.$ Let $\bar{r}=\max r_i,$ and choose any $r>\bar{r}.$ Then $r>r_i$ for all $i,$ so that $e_rf_i(x)\in\R$ for all $i$ and for all $x.$ Then each infimum in \eqref{pb1} is a real number, hence the minimum of \eqref{pb1} exists. Therefore, $e_rf(\bar{x})\in\R,$ and we have that $f$ is prox-bounded.\qedhere
\end{itemize}
\end{proof}
\begin{thm}\label{thm1}
Let $f:\R^n\to\OR$ be a proper, lsc, piecewise function and let each $f_i+\iota_{S_i}$ be prox-bounded with threshold $r_i.$ Then the threshold of $f$ is $\bar{r}=\max r_i.$
\end{thm}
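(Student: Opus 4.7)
The plan is to combine the calculation already carried out in the proof of Proposition~\ref{prop1} with the monotonicity of $r\mapsto e_rf(x)$ to pin down the threshold from both sides. The $(\Leftarrow)$ direction of Proposition~\ref{prop1} in fact proves slightly more than stated: for every $r>\max r_i$, the decomposition
\[
 e_rf(\bar{x})=\min_{i}\,\inf_{y}\Bigl\{f_i(y)+\iota_{S_i}(y)+\tfrac{r}{2}\|y-\bar{x}\|^2\Bigr\}=\min_i e_r(f_i+\iota_{S_i})(\bar{x})
\]
is a finite minimum of finite real numbers. So the threshold $\bar r$ of $f$ already satisfies $\bar r\le\max r_i$; only the reverse inequality requires work.

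For the lower bound I would argue by contradiction. Assume $\bar r<\max r_i$ and let $j$ be an index with $r_j=\max r_i$. Choose any $r$ with $\bar r<r<r_j$. By Fact~\ref{fact2}, $\dom e_rf=\R^n$, so $e_rf(x)\in\R$ for every $x$. On the other hand, since $r<r_j$, the definition of threshold together with monotonicity of the Moreau envelope in the parameter (if $r_1\le r_2$ then $e_{r_1}g(x)\le e_{r_2}g(x)$, because $\tfrac{r_1}{2}\|y-x\|^2\le\tfrac{r_2}{2}\|y-x\|^2$) forces
\[
 e_r(f_j+\iota_{S_j})(x)=-\infty\quad\text{for every } x\in\R^n.
\]
Feeding this into the decomposition above gives $e_rf(x)\le e_r(f_j+\iota_{S_j})(x)=-\infty$ for every $x$, contradicting $e_rf(x)\in\R$. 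Hence $\bar r\ge\max r_i$, and combining the two inequalities yields $\bar r=\max r_i$.

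The main obstacle is really the justification of the ``below-threshold'' statement that $r<r_j$ implies $e_r(f_j+\iota_{S_j})\equiv-\infty$ on all of $\R^n$, rather than merely at some points. This is the content of the parenthetical remark following Fact~\ref{fact4}; its proof is the short monotonicity argument indicated above, which I would either invoke as a standard consequence of the definition of threshold or record explicitly as a one-line observation before the contradiction step. Once that is in hand, the decomposition from Proposition~\ref{prop1} does all the remaining work and no further calculus is needed.
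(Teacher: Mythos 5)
Your proposal is correct and follows essentially the same route as the paper: the upper bound $\bar r\le\max r_i$ comes from the decomposition in Proposition~\ref{prop1}, and the lower bound comes from observing that for $r<r_j$ the term $e_r(f_j+\iota_{S_j})\equiv-\infty$ forces $e_rf\equiv-\infty$ through that same decomposition. The only differences are cosmetic: you phrase the lower bound as a contradiction where the paper argues directly, and you explicitly supply the monotonicity-in-$r$ justification for the below-threshold statement, which the paper simply asserts in the remark following Fact~\ref{fact4}.
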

\begin{proof}
Choose any $r>\max r_i.$ By Proposition \ref{prop1}, there exists $\bar{x}$ such that $e_rf(\bar{x})\in\R.$ Since $r$ is arbitrary, we have that $f$ is prox-bounded for all $r>\max r_i.$ Hence, $\bar{r}\leq\max r_i.$ Now choose any $r<\max r_i.$ Then there exists $j\in\{1,2,\ldots,m\}$ such that $r<r_j.$ By definition of prox-boundedness, $e_r\left(f_j+\iota_{S_j}\right)(x)=-\infty$ for all $x.$ The Moreau envelope of $f$ is the expression of \eqref{pb1}, whose minimand contains at least one instance of $-\infty$ due to $f_j+\iota_{S_j}.$ Hence, $e_rf(x)=-\infty$ for all $x,$ and we have that $\bar{r}\geq\max r_i.$ Therefore, $\bar{r}=\max r_i.$
\end{proof}
We have our first results for piecewise functions. However, the result of Theorem \ref{thm1} is weakened if the term $\iota_{S_i}$ is removed from the statement and we require $f_i$ itself to be prox-bounded, as Theorem \ref{cor1} shows.
\begin{thm}\label{cor1}
For $i\in\{1,2,\ldots,m\},$ let $f_i:\R^n\rightarrow\OR$ be proper, lsc and prox-bounded with threshold $r_i.$ With these $f_i,$ define a piecewise function $f$ as per Definition \ref{df:piecewise}. Then $f$ is prox-bounded with threshold $\bar{r}\leq\max r_i.$
\end{thm}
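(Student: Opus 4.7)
The plan is to reduce Theorem \ref{cor1} directly to Theorem \ref{thm1} by showing that adding the indicator $\iota_{S_i}$ can only help with prox-boundedness. So the strategy is: prove that if $f_i$ is prox-bounded with threshold $r_i$, then $f_i + \iota_{S_i}$ is prox-bounded with some threshold $\tilde{r}_i \le r_i$, and then invoke Theorem \ref{thm1} on the functions $f_i + \iota_{S_i}$ to conclude that the threshold of $f$ equals $\max \tilde{r}_i$, which is in turn bounded above by $\max r_i$.

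The key step is the monotonicity of the Moreau envelope under restriction of the feasible set. Fix $i$ and any $r > r_i$. For any $x \in \R^n$, observe that
$$e_r(f_i + \iota_{S_i})(x) = \inf_{y \in S_i}\left\{f_i(y) + \tfrac{r}{2}\|y-x\|^2\right\} \ge \inf_{y \in \R^n}\left\{f_i(y) + \tfrac{r}{2}\|y-x\|^2\right\} = e_r f_i(x).$$
Since $f_i$ is proper, lsc and prox-bounded with threshold $r_i$, Fact \ref{fact2} gives $e_r f_i(x) > -\infty$ for all $x \in \R^n$. Hence $e_r(f_i + \iota_{S_i})(x) > -\infty$ as well, so $f_i + \iota_{S_i}$ is prox-bounded. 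Since this holds for every $r > r_i$, the threshold $\tilde{r}_i$ of $f_i + \iota_{S_i}$ satisfies $\tilde{r}_i \le r_i$.

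With this in hand, $f_i + \iota_{S_i}$ is proper, lsc and prox-bounded for each $i$, and $f$ is (by hypothesis) a proper, lsc piecewise function built from the $f_i$. Theorem \ref{thm1} then applies and gives that $f$ is prox-bounded with threshold $\bar{r} = \max_i \tilde{r}_i \le \max_i r_i$, as required.

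I do not expect any genuine obstacle here: the result is intentionally weaker than Theorem \ref{thm1} (an inequality rather than equality), precisely because restricting $f_i$ to $S_i$ may strictly lower its threshold (for instance if $f_i$ is unbounded below off of $S_i$ but well-behaved on $S_i$). Consequently the only technical content is the monotonicity observation above, and the proper/lsc hypotheses on $f$ transfer without issue to the restricted pieces needed to cite Theorem \ref{thm1}.
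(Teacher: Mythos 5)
Your proof is correct and follows essentially the same route as the paper's: show $e_r(f_i+\iota_{S_i})\geq e_rf_i$ so that each restricted piece $f_i+\iota_{S_i}$ has threshold $\tilde{r}_i\leq r_i$, then invoke Theorem \ref{thm1}. If anything, your version is slightly more careful, since you test at $r>r_i$ (where Fact \ref{fact2} guarantees finiteness everywhere) rather than at $r_i$ itself, where the paper's claim that $e_{r_i}f_i(\bar{x})>-\infty$ for some $\bar{x}$ need not hold, as the domain of the envelope at the threshold can be empty (a point the paper itself notes in Section \ref{sec:prelim}).
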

\begin{proof}
Since $f_i$ is prox-bounded with threshold $r_i,$ there exists $\bar{x}\in\R^n$ such that $e_{r_i}f_i(\bar{x})>-\infty.$ Since 
$$e_{r_i}f_i(\bar{x})=\inf\limits_{y\in\R^n}\left\{f_i(y)+\frac{r_i}{2}\|y-\bar{x}\|^2\right\}\leq\inf\limits_{y\in S_i}\left\{f_i(y)+\frac{r_i}{2}\|y-\bar{x}\|^2\right\}=e_{r_i}(f_i+\iota_{S_i})(\bar{x}),$$
we have that $e_{r_i}(f_i+\iota_{S_i})(\bar{x})>-\infty.$ Hence, $f_i+\iota_{S_i}$ is prox-bounded with threshold $\tilde{r}_i\leq r_i.$ This is true for all $i\in\{1,2,\ldots,m\},$ so Theorem \ref{thm1} applies and we have 
$\bar{r}=\max\tilde{r}_i\leq\max r_i.$
\end{proof}
\noindent The best we can do is an upper bound in this case. One might hope to establish a lower bound for $\bar{r}$ as well, such as $\min r_i.$ However, this cannot be done in the general setting of Theorem \ref{cor1}. Example \ref{ex1} illustrates why not.
\begin{ex}\label{ex1}
Let $f_1,f_2:\R\rightarrow\R,$
$$f_1(x)=\begin{cases}x^2,&x<0,\\-x^2,&x\geq0,\end{cases}\qquad f_2(x)=\begin{cases}-x^2,&x<0,\\x^2,&x\geq0.\end{cases}$$
Define
$$f(x)=\begin{cases}f_1(x),&x<0,\\f_2(x),&x\geq0.\end{cases}$$
Then $r_1=r_2=2$ and $\bar{r}=0.$
\end{ex}
\begin{proof}
Considering $f_1$ first, we define
$$\varphi_r(y)=f_1(y)+\frac{r}{2}|y-x|^2=\begin{cases}\left(1+\frac{r}{2}\right)y^2-rxy+\frac{r}{2}x^2,&y<0,\\
\left(-1+\frac{r}{2}\right)y^2-rxy+\frac{r}{2}x^2,&y\geq0.\end{cases}$$
For any $r>2,$ both pieces of $\varphi_r$ are strictly convex quadratic. Thus, $e_rf_1(x)=\inf\phi_r(y)>-\infty$ and $r_1\leq2.$ For any $r<2,$ the second piece of $\varphi_r$ is concave quadratic, so $e_rf_1(x)=-\infty$ for all $x$ and $r_1\geq2.$ Therefore, $r_1=2.$ Similarly, $r_2=2.$ But $f(x)=x^2$ has $\bar{r}=0$ by Fact \ref{fact1}.\end{proof}
\begin{figure}[H]
\begin{center}
\includegraphics[width=0.4\textwidth]{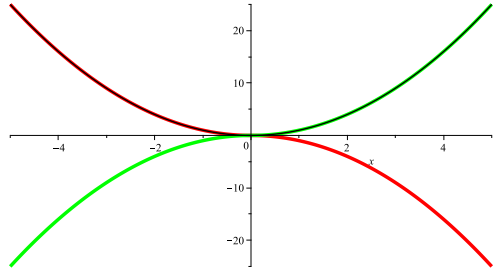}
\end{center}
\caption{$f_1$ (red) and $f_2$ (green) have threshold 2, but $f=\max\{f_1,f_2\}$ (black) has threshold 0.}\label{pb1fig}
\end{figure}
\noindent It is equally simple to construct an example where $\bar{r}=\max r_i$ for a piecewise function, for instance
$$f(x)=\begin{cases}f_1(x),&x\geq0,\\f_2(x),&x<0\end{cases}$$
where $f_1,f_2$ are defined in Example \ref{ex1}. In that case, $f(x)=-x^2$ and $\bar{r}=2=\max\{r_1,r_2\}.$ So we cannot do better than bounding $\bar{r}$ from above in this most general setting. Furthermore, one can obtain a prox-bounded function from the sum of two functions that are not prox-bounded. For instance, $f_1(x)=x^3$ and $f_2(x)=-x^3$ are not prox-bounded, but their sum is the constant function zero, with threshold zero. The next section considers more specific cases of both the sum and the composition of functions, where we can make some tighter conclusions about exact thresholds.

\section{Calculus of the threshold of prox-boundedness}\label{sec:calculus}

In this section, we consider the thresholds of the sum and the composition of prox-bounded functions. The functions here are no longer (necessarily) piecewise functions, as they were in the previous section. The following definition and facts will be useful.
\begin{df}[Fenchel conjugate]
For any function $f:\R^n\to\OR$, the \emph{Fenchel conjugate} of $f$ is the function $f^*:\R^n\to\OR$ defined by$$f^*(y)=\sup\limits_{x\in\R^n}\{\langle y,x\rangle-f(x)\}.$$
\end{df}
The Fenchel conjugate and the Moreau envelope enjoy a beautiful equivalence, as the following fact states.
\begin{fact}\emph{\cite[Proposition 2.1]{MR3783644}}\label{fenchfact}
For any proper function $f:\R^n\to\OR$,$$e_rf(x)=\frac{r}{2}\|x\|^2-g^*(rx),$$where $g(x)=f(x)+\frac{r}{2}\|x\|^2$.
\end{fact}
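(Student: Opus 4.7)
The plan is to unfold the definition of $e_rf(x)$, complete the square in a way that isolates a Fenchel-conjugate structure, and then read off the identity. All ingredients are purely algebraic; no regularity beyond properness of $f$ is needed, and properness is used only to ensure that $g=f+\tfrac{r}{2}\|\cdot\|^2$ is proper, so that $g^*$ is well defined and the supremum defining it is not a trivial $-\infty$.

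First I would expand the quadratic term in the infimum:
\begin{equation*}
\tfrac{r}{2}\|y-x\|^2 \;=\; \tfrac{r}{2}\|y\|^2 \;-\; r\langle y,x\rangle \;+\; \tfrac{r}{2}\|x\|^2,
\end{equation*}
and then substitute this into the definition
\begin{equation*}
e_rf(x)\;=\;\inf_{y\in\R^n}\Bigl\{f(y)+\tfrac{r}{2}\|y-x\|^2\Bigr\}.
\end{equation*}
Since the term $\tfrac{r}{2}\|x\|^2$ does not depend on $y$, it pulls out of the infimum; grouping $f(y)+\tfrac{r}{2}\|y\|^2 = g(y)$ according to the hypothesis, I obtain
\begin{equation*}
e_rf(x)\;=\;\tfrac{r}{2}\|x\|^2 \;+\;\inf_{y\in\R^n}\bigl\{g(y)-\langle rx,y\rangle\bigr\}.
\end{equation*}

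The last step is to recognize the infimum as the negative of a supremum and rewrite it as a Fenchel conjugate evaluated at $rx$:
\begin{equation*}
\inf_{y}\bigl\{g(y)-\langle rx,y\rangle\bigr\}
\;=\;-\sup_{y}\bigl\{\langle rx,y\rangle-g(y)\bigr\}
\;=\;-g^{*}(rx).
\end{equation*}
Combining with the previous display yields $e_rf(x)=\tfrac{r}{2}\|x\|^2-g^{*}(rx)$, as required. There is essentially no hard step here; the only subtlety worth flagging is that the identity is stated in $\OR$, so one should verify it remains valid when $g^{*}(rx)=+\infty$ (in which case the infimum in the preceding display is $-\infty$, giving $e_rf(x)=-\infty$) and when it is finite. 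Properness of $f$, inherited by $g$, is exactly what prevents the degenerate situation $g\equiv+\infty$ that would make the formula vacuous.
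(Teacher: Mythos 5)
Your proof is correct and is the standard derivation of this identity: expand the square, pull out the $y$-independent term, and recognize the remaining infimum as $-g^*(rx)$. The paper states this as a cited fact without proof, and your argument (including the check of the extended-real-valued cases) is exactly what the cited source does.
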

\begin{fact}\emph{\cite[Exercise 1.24]{rockwets}}\label{proxfact1}
For a proper, lsc function $f:\R^n\to\OR,$ the following are equivalent:
\begin{itemize}
\item[\rm(i)]$f$ is prox-bounded;
\item[\rm(ii)]$f$ majorizes a quadratic function;
\item[\rm(iii)]there exists $r\in\R$ such that $f+\frac{r}{2}\|\cdot\|^2$ is bounded below;
\item[\rm(iv)]$\liminf\limits_{\|x\|\to\infty}\frac{f(x)}{\|x\|^2}>-\infty.$
\end{itemize}If $\hat{r}$ is the infimum of all $r$ for which \emph{(iii)} holds, the limit in \emph{(iv)} is $-\frac{\hat{r}}{2}$ and the threshold for $f$ is $\bar{r}=\max\{0,\hat{r}\}.$
\end{fact}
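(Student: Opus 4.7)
The plan is to prove the four-way equivalence by routing everything through condition (iii), which is the clean algebraic hinge between the quadratic-majorant picture of (ii) and the quadratic-growth picture of (iv), and then to leverage a completing-the-square argument to connect (iii) to (i). Along the way I keep track of the constants closely enough to read off both the identification of the $\liminf$ in (iv) with $-\hat{r}/2$ and the threshold formula $\bar{r}=\max\{0,\hat{r}\}$ at the end.

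The equivalence (iii)$\Leftrightarrow$(ii) is immediate: the inequality $f+\tfrac{r}{2}\|\cdot\|^2\geq c$ rearranges to the quadratic minorant $f(x)\geq -\tfrac{r}{2}\|x\|^2+c$; conversely, if $f$ majorizes a quadratic $\alpha\|x\|^2+\langle b,x\rangle+\gamma$, then adding $\tfrac{r}{2}\|\cdot\|^2$ for any $r>-2\alpha$ yields a coercive quadratic, hence one bounded below. For (iii)$\Leftrightarrow$(iv), dividing $f(x)\geq c-\tfrac{r}{2}\|x\|^2$ by $\|x\|^2$ and taking $\liminf$ as $\|x\|\to\infty$ gives $\liminf f(x)/\|x\|^2\geq -r/2$; optimizing over $r>\hat{r}$ forces $\liminf f(x)/\|x\|^2\geq -\hat{r}/2$. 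Conversely, given $L:=\liminf f(x)/\|x\|^2 > -\infty$ and any $\epsilon>0$, one has $f(x)\geq(L-\epsilon)\|x\|^2$ outside some ball $B_R(0)$; on the compact set $\overline{B_R(0)}$ the lsc function $f+\tfrac{r}{2}\|\cdot\|^2$ attains its finite minimum, so combining these for any $r>-2(L-\epsilon)$ gives (iii). Sending $\epsilon\to 0$ yields $\hat{r}\leq -2L$, and hence $L=-\hat{r}/2$.

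The substantive step is (iii)$\Leftrightarrow$(i) together with $\bar{r}=\max\{0,\hat{r}\}$. Starting from $f+\tfrac{r^*}{2}\|\cdot\|^2\geq c$ and fixing any $r>r^*$, I expand
$$f(y)+\tfrac{r}{2}\|y-x\|^2 \geq c + \tfrac{r-r^*}{2}\|y\|^2 - r\langle y,x\rangle + \tfrac{r}{2}\|x\|^2$$
and complete the square in $y$ in the strictly convex quadratic on the right, whose infimum over $y$ is finite. Hence $e_rf(x)>-\infty$ for every $x$ and every $r>r^*$, proving $\bar{r}\leq\hat{r}$ when $\hat{r}\geq 0$; when $\hat{r}<0$, taking any $r\in(\hat{r},0)$ in (iii) shows $f$ is already bounded below, so Fact \ref{fact1} forces $\bar{r}=0$. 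Conversely, if $e_rf(x_0)=c>-\infty$, then $f(y)\geq c-\tfrac{r}{2}\|y-x_0\|^2$ for all $y$, and Young's inequality $r\langle y,x_0\rangle\leq\tfrac{\epsilon}{2}\|y\|^2+\tfrac{r^2}{2\epsilon}\|x_0\|^2$ absorbs the cross term, yielding $f+\tfrac{r+\epsilon}{2}\|\cdot\|^2$ bounded below and hence $\hat{r}\leq r+\epsilon$ for every $\epsilon>0$. The main obstacle is precisely this cross-term management: one cannot take $r=r^*$ in the forward direction or $r=\bar{r}$ in the converse, and the $\max\{0,\hat{r}\}$ form of the threshold arises because $\hat{r}$ in (iii) may be negative while $\bar{r}$ is by definition an infimum over $r\geq 0$.
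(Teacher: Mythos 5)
The paper offers no proof of this statement: it is imported verbatim as a Fact, cited to \cite[Exercise 1.24]{rockwets}, so there is no in-paper argument to compare yours against. Your proof is correct and is essentially the standard one: you route (ii) and (iv) through the algebraic condition (iii), and you connect (iii) to (i) by completing the square in $y$ for the forward direction and absorbing the cross term $r\langle y,x_0\rangle$ via Young's inequality for the converse; the constants are tracked carefully enough to extract both $L=-\hat{r}/2$ and $\bar{r}=\max\{0,\hat{r}\}$, including the correct observation that $\hat{r}$ may be negative while $\bar{r}$ is an infimum over $r\geq 0$. Two minor points, neither of which affects validity: in the (iv)$\Rightarrow$(iii) step the minimum of the lsc function on the compact set $\overline{B_R(0)}$ need not be \emph{finite} (it is $+\infty$ if $f\equiv+\infty$ there), but all you use is that it is $>-\infty$, which holds because $f$ is proper and lsc; and the degenerate case $\hat{r}=-\infty$ (e.g.\ $f$ growing faster than every quadratic, so that (iii) holds for every $r\in\R$) remains consistent with the stated formulas provided one reads $\max\{0,-\infty\}=0$ and $-\hat{r}/2=+\infty$.
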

First, we address the quadratic function mentioned in Fact \ref{proxfact1}(ii). Rockafellar and Wets state that it exists, but give no details as to its form. Lemma \ref{lem:proxfact1lem} below describes the curvature that such a quadratic function must have, in terms of the threshold.
\begin{lem}\label{lem:proxfact1lem}
Let $f:\R^n\to\OR$ be proper, lsc and prox-bounded with threshold $\bar{r}>0$. Then $f$ is bounded below by $-\frac{\bar{r}}{2}\|\cdot\|^2+m$ for some $m\in\R$. Furthermore, for any choice of $r<\bar{r}$ there does not exist $m\in\R$ such that $f$ is bounded below by $-\frac{r}{2}\|\cdot\|^2+m$. Therefore, $\frac{\bar{r}}{2}$ is the smallest possible curvature of a quadratic function that is a minorant of $f$.
\end{lem}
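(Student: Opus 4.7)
The plan is to transfer the assertion to the language of quadratic minorants via Fact \ref{proxfact1}(iii). Let $\hat{r}$ denote the infimum of the curvatures $r$ for which $f + \tfrac{r}{2}\|\cdot\|^2$ is bounded below; since $\bar{r} > 0$, the identity $\bar{r} = \max\{0,\hat{r}\}$ supplied by Fact \ref{proxfact1} collapses to $\bar{r} = \hat{r}$. In this reformulation, the first claim becomes the assertion that the infimum $\hat{r}$ is itself admissible, and the second becomes the assertion that every $r$ strictly smaller than $\hat{r}$ fails to be admissible. The equivalence between being bounded below by $-\tfrac{r}{2}\|\cdot\|^2 + m$ and the admissibility of $r$ is just a rewriting and requires no work.

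I would dispatch the second claim first, since it is immediate from the definition of infimum. Fix any $r < \bar{r} = \hat{r}$. By the definition of $\hat{r}$, $r$ lies outside the set of admissible curvatures, so no $m \in \R$ satisfies $f(x) + \tfrac{r}{2}\|x\|^2 \geq m$ for all $x \in \R^n$; equivalently, $f$ cannot be bounded below by $-\tfrac{r}{2}\|\cdot\|^2 + m$ for any $m \in \R$, which is exactly the stated non-existence.

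For the first claim I would fix a decreasing sequence $r_k \downarrow \bar{r}$ and use Fact \ref{proxfact1}(iii) to extract constants $m_k \in \R$ with $f(x) \geq -\tfrac{r_k}{2}\|x\|^2 + m_k$ for every $x$. Sending $k \to \infty$ pointwise yields $f(x) \geq -\tfrac{\bar{r}}{2}\|x\|^2 + \limsup_k m_k$, so the entire proof hinges on establishing $\limsup_k m_k > -\infty$. To argue this, I would combine Fact \ref{proxfact1}(iv), which identifies $\liminf_{\|x\|\to\infty} f(x)/\|x\|^2 = -\bar{r}/2$ exactly, with the lsc and properness of $f$ in order to control $f + \tfrac{\bar{r}}{2}\|\cdot\|^2$ separately on a sufficiently large closed ball (where lsc gives attainment of a finite infimum) and on its complement (where (iv) controls the behaviour at infinity). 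The main obstacle is precisely this final step: Fact \ref{proxfact1}(iv) only provides $f(x) > (-\tfrac{\bar{r}}{2} - \varepsilon)\|x\|^2$ for $\|x\|$ large, with $\varepsilon > 0$ arbitrary, whereas the lemma demands the exact curvature $\bar{r}/2$. Turning that $\varepsilon$-approximate asymptotic inequality into a uniform lower bound for $f + \tfrac{\bar{r}}{2}\|\cdot\|^2$ is where the argument requires the most care. Once this is in hand, the concluding sentence of the lemma, identifying $\bar{r}/2$ as the smallest admissible curvature, follows immediately by combining the two assertions.
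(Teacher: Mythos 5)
Your handling of the second claim is correct and coincides with the paper's: if some $r<\bar r$ admitted a constant $m$ with $f\ge -\frac r2\|\cdot\|^2+m$, then $f+\frac r2\|\cdot\|^2$ would be bounded below, contradicting that $\bar r=\hat r$ is the infimum in Fact~\ref{proxfact1}(iii). The problem is the first claim. You have correctly isolated the crux --- whether the infimal curvature $\hat r$ is itself admissible, i.e.\ whether $\limsup_k m_k>-\infty$ along $r_k\downarrow\bar r$ --- but you leave that step open, and it cannot be closed. The set of admissible curvatures need not contain its infimum, and the first assertion of the lemma is false in general. Take $f(x)=-\frac{\bar r}{2}\|x\|^2-\|x\|$ with $\bar r>0$. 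Then $f(x)+\frac r2\|x\|^2=\frac{r-\bar r}{2}\|x\|^2-\|x\|$ is bounded below precisely when $r>\bar r$, so $\hat r=\bar r$ and the prox-threshold equals $\bar r>0$; yet $f(x)\ge-\frac{\bar r}{2}\|x\|^2+m$ would force $-\|x\|\ge m$ for all $x$, which is impossible. This is exactly the phenomenon the paper's own Remark records for $-|x|$ at threshold $0$, merely translated to positive curvature. Your proposed repair via Fact~\ref{proxfact1}(iv) cannot succeed either, since (iv) only yields $f(x)\ge(-\frac{\bar r}{2}-\varepsilon)\|x\|^2$ for large $\|x\|$, and the counterexample shows the $\varepsilon$ genuinely cannot be removed.

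You should know that the paper's printed proof has the same defect: it invokes Fact~\ref{proxfact1}(iii) at $r=\bar r$ itself, silently treating the infimum as attained. The statement that survives is weaker: for every $r>\bar r$ there is $m_r$ with $f\ge-\frac r2\|\cdot\|^2+m_r$, and for no $r<\bar r$ does such a constant exist; hence $\frac{\bar r}{2}$ is the infimum, but not necessarily the minimum, of the curvatures of quadratic minorants of $f$. Your write-up, by flagging precisely the step that fails, is closer to the truth than the paper's, but as a proof of the stated lemma it has a genuine and unfillable gap.
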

\begin{proof}
By Fact \ref{proxfact1}(iii), we have that $f+\frac{\bar{r}}{2}\|\cdot\|^2$ is bounded below, i.e., there exists $m\in\R$ such that for all $x\in\dom f$, $f(x)+\frac{\bar{r}}{2}\|x\|^2\geq m$. Rearranging, we have\begin{equation*}f(x)\geq-\frac{\bar{r}}{2}\|x\|^2+m\quad\forall x\in\dom f.\end{equation*}
Suppose that for some $r<\bar{r}$, there exists $m$ such that the above inequality holds, replacing $\bar{r}$ with $r$. Then we have that $f+\frac{r}{2}\|\cdot\|^2$ is bounded below, which by Fact \ref{proxfact1}(iii) and the postamble contradicts the fact that $\bar{r}$ is the threshold of $f$. Therefore, $\frac{\bar{r}}{2}$ is the minimum curvature of a quadratic minorant of $f$.
\end{proof}
\begin{rem}
In Lemma \ref{lem:proxfact1lem}, the condition $\bar{r}>0$ is necessary. We cannot make such a determination of the quadratic curvature in the case of $\bar{r}=0$, as there exist functions with threshold zero (such as affine functions) that are bounded below by a quadratic of curvature zero (i.e.\ affine function) and others that are not. For instance, the function $f:\R\to\R$, $f(x)=-|x|$ has threshold zero and is not bounded below by any affine function, but is bounded below by a concave quadratic function of any curvature greater than zero. We see this by noting that the inequality$$-|x|\geq-\frac{r}{2}x^2+m$$can be made true for all $x$ by making $m=m_r$ and shifting $m_r$ downwards as $r$ decreases.
\end{rem}
Now we focus on the threshold of the sum of two prox-bounded functions. As in the case of the piecewise function with prox-bounded pieces, we will find that an exact threshold cannot be obtained and we settle for an upper bound in the general case. If certain restrictions are put on one or both of the functions, an exact threshold can be determined.
\begin{lem}\label{lem:moreauinequality}
Let $f_1,f_2:\R^n\to\OR$ be proper, lsc and prox-bounded with respective thresholds $r_1,r_2$. If $f_1\leq f_2$, then for any $r>\max\{r_1,r_2\}$, we have$$e_rf_1\leq e_rf_2.$$
\end{lem}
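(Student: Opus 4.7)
The plan is to reduce this to a one-line monotonicity argument for infima, with the threshold hypothesis playing the sole role of guaranteeing that both sides of the inequality are finite (so that the comparison is substantive and not a vacuous $-\infty \leq -\infty$ or a pathological comparison in the extended reals).

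First I would fix an arbitrary $x \in \R^n$ and, appealing to Fact \ref{fact2}, note that since $r > \max\{r_1, r_2\} \geq r_i$ for $i = 1, 2$, we have $e_r f_i(x) \in \R$ for both $i$. In particular, neither Moreau envelope takes the value $-\infty$ at $x$, so the claimed inequality is a genuine inequality between real numbers.

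Next, for every $y \in \R^n$ the pointwise hypothesis $f_1(y) \leq f_2(y)$ gives
\begin{equation*}
f_1(y) + \frac{r}{2}\|y - x\|^2 \;\leq\; f_2(y) + \frac{r}{2}\|y - x\|^2.
\end{equation*}
Taking the infimum over $y \in \R^n$ on both sides preserves the inequality (infimum is monotone in the integrand), yielding
\begin{equation*}
e_r f_1(x) \;=\; \inf_{y \in \R^n}\left\{f_1(y) + \frac{r}{2}\|y - x\|^2\right\} \;\leq\; \inf_{y \in \R^n}\left\{f_2(y) + \frac{r}{2}\|y - x\|^2\right\} \;=\; e_r f_2(x).
\end{equation*}
Since $x$ was arbitrary, this proves $e_r f_1 \leq e_r f_2$.

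There is no genuine obstacle here; the only thing to be careful about is the role of the prox-bound threshold. The pointwise inequality on Moreau envelopes follows from monotonicity of the infimum alone and does not actually require prox-boundedness for its derivation. The hypothesis $r > \max\{r_1, r_2\}$ is invoked so that Fact \ref{fact2} guarantees both envelopes are real-valued on all of $\R^n$, which is presumably the form in which the authors intend to use the lemma later (e.g., in calculus rules for thresholds, where one manipulates finite Moreau envelopes).
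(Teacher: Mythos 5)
Your proof is correct, but it takes a different route from the paper's. The paper proves this lemma through the Fenchel conjugate representation: it sets $g_i(x)=f_i(x)+\frac{r}{2}\|x\|^2$, observes $g_1\leq g_2$, invokes the order-reversing property of conjugation to get $g_1^*(rx)\geq g_2^*(rx)$, and then applies Fact \ref{fenchfact} ($e_rf(x)=\frac{r}{2}\|x\|^2-g^*(rx)$) to conclude. You instead argue directly from the definition of the Moreau envelope: the pointwise inequality $f_1(y)+\frac{r}{2}\|y-x\|^2\leq f_2(y)+\frac{r}{2}\|y-x\|^2$ passes through the infimum over $y$. Your argument is more elementary and, as you correctly note, does not actually need the hypothesis $r>\max\{r_1,r_2\}$ at all --- monotonicity of the infimum gives $e_rf_1(x)\leq e_rf_2(x)$ in the extended reals for every $r\geq 0$, with the threshold condition serving only to make both sides finite. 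The paper's conjugate-based route buys consistency with the machinery it develops around Fact \ref{fenchfact} and exercises the duality viewpoint that recurs in Section \ref{sec:calculus}, but it is doing the same underlying work (order-reversal of the conjugate is itself just monotonicity of a supremum), and it has to be slightly more careful about extended-real arithmetic in the identity $e_rf(x)=\frac{r}{2}\|x\|^2-g^*(rx)$, a care your direct argument renders unnecessary. Your observation about the true role of the threshold hypothesis is a genuine sharpening of the statement.
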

\begin{proof}
Setting $g_1(x)=f_1(x)+\frac{r}{2}\|x\|^2$ and $g_2(x)=f_2(x)+\frac{r}{2}\|x\|^2$, we have $g_1\leq g_2$. By \cite[Proposition 13.14(ii)]{convmono}, $g_1^*(rx)\geq g_2^*(rx)$. By Fact \ref{fenchfact}, we have
\begin{align*}
e_rf_1(x)&=\frac{r}{2}\|x\|^2-g_1^*(rx),\\
&\leq \frac{r}{2}\|x\|^2-g_2^*(rx),\\
&=e_rf_2(x).\qedhere
\end{align*}
\end{proof}
\begin{cor}\label{rcor}
Let $f_1,f_2:\R^n\to\OR$ be proper, lsc and prox-bounded with respective thresholds $r_1,r_2$. If $f_1\leq f_2$, then $r_1\geq r_2$.
\end{cor}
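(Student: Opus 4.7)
The plan is to argue by contraposition: assume $r_1<r_2$ and derive a contradiction by selecting a prox-parameter that lies above the threshold of $f_1$ but below that of $f_2$.

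First I would pick any $r$ with $r_1<r<r_2$. Since $r>r_1$, Fact \ref{fact2} guarantees $\dom e_rf_1=\R^n$, so $e_rf_1(x)>-\infty$ for every $x\in\R^n$. On the other hand, since $r<r_2$ and $r_2$ is the infimum of parameters for which $f_2$ is prox-bounded, the dichotomy spelled out in the paragraph after Fact \ref{fact4} forces $e_rf_2(x)=-\infty$ for every $x\in\R^n$.

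Next I would observe that $f_1\leq f_2$ yields $e_rf_1\leq e_rf_2$ pointwise for every $r>0$: for each fixed $x$ and each $y$,
\[ f_1(y)+\tfrac{r}{2}\|y-x\|^2\leq f_2(y)+\tfrac{r}{2}\|y-x\|^2, \]
and the inequality passes to the infimum over $y$. This is the content of Lemma \ref{lem:moreauinequality}, but note that its conclusion here does not actually require $r$ to exceed both thresholds; the inequality is an immediate consequence of the definition of the Moreau envelope. Combining the two observations yields $-\infty<e_rf_1(x)\leq e_rf_2(x)=-\infty$ at any $x$, an impossibility. Hence the assumption $r_1<r_2$ is false and $r_1\geq r_2$.

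There is no real obstacle here: the result is essentially a one-line consequence of the monotonicity of $e_r$ in its functional argument together with the strict behavioural dichotomy of $e_rf$ across the threshold. The only mild subtlety is that Lemma \ref{lem:moreauinequality} is stated for $r>\max\{r_1,r_2\}$, whereas the contradiction above uses $r$ strictly between $r_1$ and $r_2$; however, the pointwise monotonicity that we actually need holds for every $r>0$ directly from the definition, so invoking the lemma is either harmless or can be replaced by this one-line justification.
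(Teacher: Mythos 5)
Your proof is correct. The paper states this corollary without proof, evidently intending it to follow from Lemma \ref{lem:moreauinequality}, and your argument is the natural way to carry that out: take $r\in(r_1,r_2)$, use Fact \ref{fact2} to get $e_rf_1>-\infty$ everywhere, use the sub-threshold dichotomy to get $e_rf_2\equiv-\infty$, and contradict the monotonicity $e_rf_1\leq e_rf_2$. Your side remark is the genuinely valuable part: Lemma \ref{lem:moreauinequality} as stated only applies for $r>\max\{r_1,r_2\}$, which is exactly the range where both envelopes are finite and no contradiction is available, so the corollary does \emph{not} follow from the lemma as literally stated. You correctly patch this by observing that the pointwise inequality $e_rf_1(x)\leq e_rf_2(x)$ holds for every $r>0$ directly from the definition (an infimum over $y$ of pointwise-ordered functions), which is all the contradiction requires. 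An equally short alternative, for comparison, is to use Fact \ref{proxfact1}(iii) or (iv): $f_1\leq f_2$ implies that whenever $f_1+\frac{r}{2}\|\cdot\|^2$ is bounded below so is $f_2+\frac{r}{2}\|\cdot\|^2$, whence $\hat{r}_1\geq\hat{r}_2$ and $r_1=\max\{0,\hat{r}_1\}\geq\max\{0,\hat{r}_2\}=r_2$; this avoids the envelope entirely.
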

\begin{prop}\label{boundedf2}
Let $f_1,f_2:\R^n\rightarrow\OR$ be proper, lsc and prox-bounded with respective thresholds $r_1,r_2$. Define $f(x)=(f_1+f_2)(x).$ Then $f$ is prox-bounded with threshold $\bar{r}\leq r_1+r_2.$ Moreover, if $f_2$ is bounded, then $\bar{r}=r_1.$
\end{prop}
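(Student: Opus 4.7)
The plan is to obtain the upper bound $\bar{r}\leq r_1+r_2$ by splitting the proximal quadratic term $\tfrac{r}{2}\|y-x\|^2$ between the two summands, and then recover the equality $\bar{r}=r_1$ under boundedness of $f_2$ by pairing this upper bound with a matching lower bound that exploits the upper bound on $f_2$.

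For the upper bound, I would fix any $r>r_1+r_2$ and write $r=s_1+s_2$ with $s_1>r_1$ and $s_2>r_2$, which is possible because $r-r_2>r_1$. Splitting the quadratic inside the infimum defining $e_rf(x)$ and using the elementary inequality $\inf(A+B)\geq\inf A+\inf B$ yields $e_rf(x)\geq e_{s_1}f_1(x)+e_{s_2}f_2(x)$. Fact \ref{fact2} places each $e_{s_i}f_i(x)$ in $\R$, so $e_rf(x)>-\infty$; since $r>r_1+r_2$ was arbitrary, $f$ is prox-bounded and the threshold satisfies $\bar{r}\leq r_1+r_2$.

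For the ``moreover'' claim, boundedness of $f_2$ gives $-M\leq f_2\leq M$ for some $M\geq 0$, and Fact \ref{fact1} immediately yields $r_2=0$; the first part then gives $\bar{r}\leq r_1$. For the reverse inequality, I would fix $r>\bar{r}$ and use Fact \ref{fact2} to conclude that $e_rf(x)\in\R$ for every $x\in\R^n$. The pointwise inequality $f_1=f-f_2\geq f-M$ (valid with the usual convention when $f(y)=+\infty$) transfers, after adding $\tfrac{r}{2}\|y-x\|^2$ and taking infima over $y$, to $e_rf_1(x)\geq e_rf(x)-M>-\infty$. Hence $f_1$ is prox-bounded for every $r>\bar{r}$, so $r_1\leq r$; letting $r\downarrow\bar{r}$ yields $r_1\leq\bar{r}$, and combined with $\bar{r}\leq r_1$ we obtain $\bar{r}=r_1$.

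The main obstacle is securing the lower inequality $r_1\leq\bar{r}$: the quadratic splitting used for the first part is one-directional and only yields an upper bound, and moreover boundedness \emph{from below} of $f_2$ (which already forces $r_2=0$) is by itself insufficient. It is the \emph{upper} bound on $f_2$ that allows one to recover $f_1$ from $f$ up to an additive constant and thereby transfer prox-boundedness back from $f$ to $f_1$. Without this two-sided control, cancellation phenomena analogous to Example \ref{ex1} could in principle drive $\bar{r}$ strictly below $r_1$.
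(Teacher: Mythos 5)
Your proof is correct and follows essentially the same route as the paper: the upper bound comes from the identical splitting of the proximal quadratic between $f_1$ and $f_2$ (the paper writes $r=(r_1+\varepsilon)+(r_2+\varepsilon)$ where you write $r=s_1+s_2$), and the equality under boundedness of $f_2$ rests on the same inequality relating $e_rf$ and $e_rf_1$ via the upper bound $M$ on $f_2$. The only cosmetic difference is the direction of the final step: the paper shows directly that $e_rf\equiv-\infty$ for $r\in(0,r_1)$ to conclude $\bar{r}\geq r_1$, whereas you argue contrapositively that $e_rf_1>-\infty$ whenever $r>\bar{r}$; these are logically interchangeable.
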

\begin{proof}The first part of this proposition appears as part of \cite[Lemma 2.4]{proxave}, but we provide a full proof for the sake of completeness.
For any $\varepsilon>0,$ we have
\begin{align*}
e_{r_1+r_2+2\varepsilon}f(x)&=\inf\limits_y\left\{f_1(y)+f_2(y)+\frac{r_1+r_2+2\varepsilon}{2}\|y-x\|^2\right\}\\
&=\inf\limits_y\left\{\left[f_1(y)+\frac{r_1+\varepsilon}{2}\|y-x\|^2\right]+\left[f_2(y)+\frac{r_2+\varepsilon}{2}\|y-x\|^2\right]\right\}\\
&\geq\inf\limits_y\left\{f_1(y)+\frac{r_1+\varepsilon}{2}\|y-x\|^2\right\}+\inf\limits_y\left\{f_2(y)+\frac{r_2+\varepsilon}{2}\|y-x\|^2\right\}\\
&=e_{r_1+\varepsilon}f_1(x)+e_{r_2+\varepsilon}f_2(x)>-\infty~\forall x\in\R^n.
\end{align*}
This tells us that $f$ is prox-bounded and $\bar{r}\leq r_1+r_2$. Now suppose that $f_2$ is bounded. Since $f_2$ is bounded below, we have that $r_2=0$ by Fact \ref{fact1}. Hence, $\bar{r}\leq r_1+0=r_1.$ Since $f_2$ is bounded above, there exists $M\in\R$ such that $M\geq f_2(x)~\forall x.$ Suppose that $r_1>0.$ (Otherwise, trivially $\bar{r}=0=r_1.$) Then for any $r\in(0,r_1),$ we have
\begin{align*}
e_rf(x)&=\inf\limits_y\left\{f_1(y)+f_2(y)+\frac{r}{2}\|y-x\|^2\right\},\\
&\leq\inf\limits_y\left\{f_1(y)+M+\frac{r}{2}\|y-x\|^2\right\},\\
&=M+\inf\limits_y\left\{f_1(y)+\frac{r}{2}\|y-x\|^2\right\},\\
&=M+e_rf_1(x)=-\infty.
\end{align*}
Hence, $\bar{r}\geq r_1.$ Therefore, $\bar{r}=r_1.$
\end{proof}
\begin{cor}
Let $f_1,f_2:\R^n\to\OR$ be proper, lsc and prox-bounded with threshold 0. Then $(f_1+f_2)(x)$ is prox-bounded with threshold 0.
\end{cor}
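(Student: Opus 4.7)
The plan is essentially a one-line specialization of Proposition \ref{boundedf2}. First I would apply that proposition directly with $r_1 = r_2 = 0$: the hypotheses that $f_1, f_2$ are proper, lsc, and prox-bounded are exactly what the corollary supplies, so the proposition yields that $f = f_1 + f_2$ is prox-bounded with threshold $\bar{r} \leq r_1 + r_2 = 0 + 0 = 0$.

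Second, I would invoke the definition of the threshold of prox-boundedness: the threshold is the infimum of the set $\{r \geq 0 : e_r f(x) > -\infty \text{ for some } x\}$, which is an infimum of nonnegative real numbers and hence satisfies $\bar{r} \geq 0$ automatically. Combining with the upper bound from the previous step forces $\bar{r} = 0$.

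There is no real obstacle here, since the corollary is just the specialization of Proposition \ref{boundedf2} to the case where both summands already have threshold zero. The only minor thing worth observing is that the hypotheses pass cleanly to the sum in the sense required by the proposition (lsc is preserved under addition, and each $f_i$ is already assumed prox-bounded), so no additional verification is needed before invoking it.
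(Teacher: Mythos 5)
Your proposal is correct and matches the paper's intent exactly: the paper states this corollary without proof as an immediate specialization of Proposition \ref{boundedf2} with $r_1=r_2=0$, combined with the fact that a threshold is by definition nonnegative. Nothing further is needed.
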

The very strong condition of $f_2$ being bounded above and below in Proposition \ref{boundedf2} can be relaxed slightly, as the corollary below indicates, with the same proof as the proposition.
\begin{cor}
Let $f_1,f_2:\R^n\rightarrow\OR$ be proper, lsc and prox-bounded with respective thresholds $r_1,r_2$. Define $f(x)=(f_1+f_2)(x).$ If $r_2=0$ and $f_2$ is bounded above, then $\bar{r}=r_1.$
\end{cor}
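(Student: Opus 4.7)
The plan is to mimic the two-sided argument from Proposition \ref{boundedf2}, noting that the relaxed hypothesis ($r_2=0$ and $f_2$ bounded above, rather than $f_2$ bounded on both sides) is exactly strong enough to preserve each direction of that proof. Observe that in the proof of Proposition \ref{boundedf2}, boundedness below of $f_2$ was used only to invoke Fact \ref{fact1} to conclude $r_2=0$; since $r_2=0$ is now a hypothesis, this step is no longer needed. Boundedness above of $f_2$ was used only to dominate the infimum appearing in $e_rf$, and that usage carries over verbatim.

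For the upper bound, I would apply Proposition \ref{boundedf2} directly: with $r_2=0$ the first conclusion of that proposition gives $\bar{r}\le r_1+r_2=r_1$, and in particular $f$ is prox-bounded. For the lower bound I would first dispatch the trivial case $r_1=0$, for which $0\le\bar{r}\le r_1=0$ yields $\bar{r}=r_1$ immediately. Assuming $r_1>0$, let $M\in\R$ be such that $f_2(y)\le M$ for all $y$. For any $r\in(0,r_1)$ and any $x\in\R^n$,
\begin{align*}
e_rf(x)&=\inf_y\left\{f_1(y)+f_2(y)+\tfrac{r}{2}\|y-x\|^2\right\}\\
&\le\inf_y\left\{f_1(y)+M+\tfrac{r}{2}\|y-x\|^2\right\}=M+e_rf_1(x),
\end{align*}
and since $r<r_1$ we have $e_rf_1(x)=-\infty$, so $e_rf(x)=-\infty$. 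Hence no $r<r_1$ can witness prox-boundedness of $f$, giving $\bar{r}\ge r_1$.

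Combining the two bounds yields $\bar{r}=r_1$. There is no genuine obstacle here; the only point requiring care is to recognize that the hypothesis $r_2=0$ replaces, rather than supplements, the lower-boundedness of $f_2$ used in Proposition \ref{boundedf2}, so that the proof template transfers without modification. I would flag the trivial $r_1=0$ case explicitly since dividing the argument at $r\in(0,r_1)$ is otherwise vacuous.
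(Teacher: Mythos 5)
Your proof is correct and is essentially the paper's own argument: the paper proves this corollary by simply remarking that it follows ``with the same proof as the proposition,'' i.e.\ Proposition \ref{boundedf2}, whose two-sided argument (upper bound $\bar{r}\le r_1+r_2=r_1$ from the first part, lower bound from $e_rf(x)\le M+e_rf_1(x)=-\infty$ for $r\in(0,r_1)$, with the trivial case $r_1=0$ handled separately) you have reproduced faithfully. Your explicit note that the hypothesis $r_2=0$ replaces the role previously played by lower-boundedness of $f_2$ is exactly the intended observation.
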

\begin{prop}\label{affineprop}
Let $f_1:\R^n\to\OR$ be proper and lsc. Let $f_2:\R^n\to\OR$ be an affine function. Then $f_1+f_2$ is prox-bounded with threshold $r_1$ if and only if $f_1$ is prox-bounded with threshold $r_1$.
\end{prop}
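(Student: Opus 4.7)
The plan is to reduce the problem to a direct computation via completing the square. Write $f_2(y) = \langle a, y \rangle + b$ for some $a \in \R^n$, $b \in \R$. For any $r > 0$ and any $x \in \R^n$, I would start from the definition of the Moreau envelope,
\begin{equation*}
e_r(f_1+f_2)(x) = \inf_{y}\left\{f_1(y) + \langle a, y \rangle + b + \frac{r}{2}\|y-x\|^2\right\},
\end{equation*}
and rearrange the linear-plus-quadratic terms $\langle a,y\rangle + \tfrac{r}{2}\|y-x\|^2$ by completing the square about the shifted point $x - a/r$. A short calculation yields
\begin{equation*}
\langle a,y\rangle + \frac{r}{2}\|y-x\|^2 = \frac{r}{2}\left\|y-\left(x-\tfrac{a}{r}\right)\right\|^2 + \langle a,x\rangle - \frac{\|a\|^2}{2r},
\end{equation*}
so after substitution and pulling the constant (in $y$) out of the infimum I obtain the identity
\begin{equation*}
e_r(f_1+f_2)(x) = e_r f_1\!\left(x-\tfrac{a}{r}\right) + b + \langle a,x\rangle - \frac{\|a\|^2}{2r}.
\end{equation*}

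With this identity in hand, the conclusion is essentially automatic. For any fixed $r > 0$, the additive expression $b + \langle a,x\rangle - \|a\|^2/(2r)$ is a real number, so the left-hand side is real-valued at $x$ if and only if $e_r f_1$ is real-valued at the shifted point $x - a/r$, and equals $-\infty$ precisely when $e_r f_1(x-a/r) = -\infty$. Combining this with Fact \ref{fact2} and the preamble that tells us $e_rf = -\infty$ identically for $r$ below the threshold, I can argue: if $r > r_1$ then $e_r f_1$ is finite everywhere, hence so is $e_r(f_1+f_2)$; and if $0 \leq r < r_1$ then $e_r f_1 \equiv -\infty$, forcing $e_r(f_1+f_2) \equiv -\infty$. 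Therefore the threshold of $f_1+f_2$ equals $r_1$ exactly when the threshold of $f_1$ equals $r_1$, and both directions of the equivalence follow in one step.

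The main obstacle is really just bookkeeping: verifying the completion-of-squares identity cleanly, and then being careful that the shift $x \mapsto x - a/r$ is a bijection of $\R^n$ for each fixed $r > 0$ so that ``finite at some point'' transfers between the two functions. One small subtlety is the converse direction of the iff; rather than re-running the argument for $f_1 = (f_1+f_2) + (-f_2)$, I prefer to extract the iff immediately from the identity above, since the identity is symmetric in the sense that it determines each Moreau envelope from the other. No genuinely new technique is required beyond Fact \ref{fact2} and the definition of the threshold.
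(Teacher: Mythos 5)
Your proof is correct and takes essentially the same route as the paper: both rest on the identity $e_r(f_1+f_2)(x)=e_rf_1\left(x-\tfrac{a}{r}\right)+\langle a,x\rangle+b-\tfrac{\|a\|^2}{2r}$, which the paper imports from \cite[Lemma 3.6]{bavcak2010infimal} and you derive directly by completing the square (your computation checks out). The only difference is that your argument is self-contained, and your handling of the equivalence and of $r$ below versus above the threshold is sound.
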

\begin{proof}
We see in \cite[Lemma 3.6]{bavcak2010infimal} that the Moreau envelope of $f$ can be expressed as the sum of a quadratic function and a Moreau envelope of $f_1$ only, with $x$ plus a constant as the argument. Therefore, $e_rf$ exists if and only if $e_rf_1$ exists and we have that $f$ has the same threshold as $f_1$.
\end{proof}
Proposition \ref{affineprop} invites another slight relaxation of the condition on $f_2$ in Proposition \ref{boundedf2}.
\begin{cor}
Let $f_1,f_2:\R^n\to\OR$ be prox-bounded with respective thresholds $r_1,r_2$. Define $f=f_1+f_2$. If $r_2=0$ and $f_2$ is majorized by an affine function, then $f$ is prox-bounded with threshold $\bar{r}=r_1$.
\end{cor}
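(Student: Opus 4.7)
The plan is to establish the two inequalities $\bar{r} \leq r_1$ and $\bar{r} \geq r_1$ separately, each by invoking one of the preceding results in this section, and then combine them.

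For the upper bound, I would apply Proposition \ref{boundedf2} directly: since $f_1$ and $f_2$ are both prox-bounded with thresholds $r_1$ and $r_2 = 0$ respectively, the proposition yields that $f = f_1 + f_2$ is prox-bounded with threshold $\bar{r} \leq r_1 + r_2 = r_1$. Observe that the stronger hypothesis of Proposition \ref{boundedf2} (that $f_2$ be bounded both above \emph{and} below) is not needed for this half of the conclusion; it is required only for the matching lower bound $\bar{r} \geq r_1$, which is why we must supply a different argument for that direction here.

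For the lower bound, the idea is to use the affine majorant to reduce to Proposition \ref{affineprop}. Let $a:\R^n\to\R$ be an affine function with $f_2 \leq a$ pointwise. Then $f = f_1 + f_2 \leq f_1 + a$ pointwise, so for any $r \geq 0$ and any $x \in \R^n$, taking the infimum in $y$ of the pointwise inequality for the integrand gives $e_r f(x) \leq e_r(f_1 + a)(x)$. By Proposition \ref{affineprop}, $f_1 + a$ is prox-bounded with threshold exactly $r_1$. Hence for any $r < r_1$ the right-hand side is identically $-\infty$, which forces $e_r f \equiv -\infty$ and therefore $\bar{r} \geq r_1$. Combining with the upper bound yields $\bar{r} = r_1$.

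There is no real obstacle in this proof; it is essentially a routine packaging of the two previous propositions, with the affine majorant serving as the bridge. The only technical point worth double-checking is that the pointwise inequality passes to the Moreau envelope without invoking Lemma \ref{lem:moreauinequality} (whose hypotheses would require prox-boundedness of both sides together with $r > \max\{r_1,\bar{r}\}$, neither of which is available when $r < r_1$). Here the inequality $e_r f \leq e_r(f_1 + a)$ is immediate from monotonicity of the infimum and is valid for every $r \geq 0$, so the argument goes through without restriction.
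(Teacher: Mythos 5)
Your proof is correct and follows exactly the route the paper intends for this corollary (which it states without an explicit proof, as an immediate consequence of Proposition \ref{boundedf2} for the upper bound and Proposition \ref{affineprop}, via the affine majorant, for the lower bound). Your observation that the comparison $e_rf\leq e_r(f_1+a)$ follows from plain monotonicity of the infimum, rather than from Lemma \ref{lem:moreauinequality}, is the right technical point to make.
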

Now we move on to sufficient conditions for a composition rule. This is a difficult issue; one can construct examples of composition where the resulting threshold is any nonnegative number one desires, or even nonexistent. The following simple examples demonstrate.
\begin{ex}\label{compex1}
For $a,b\geq0,$ define $f_1,f_2:\R\rightarrow\R,$
$$f_1(x)=-bx,\qquad\qquad f_2(x)=-\frac{a}{2}x^2.$$
Then the threshold of $f_1\circ f_2$ is $\bar{r}_{12}=0,$ while the threshold of $f_2\circ f_1$ is $\bar{r}_{21}=ab^2.$
\end{ex}
\begin{proof}
Since $(f_1\circ f_2)(x)=\frac{ab}{2}x^2$ is bounded below, by Fact \ref{fact1} we have $\bar{r}_{12}=0.$ Since $(f_2\circ f_1)(x)=-\frac{ab^2}{2}x^2,$ by the same method as the proof of Example \ref{ex1} we find $\bar{r}_{21}=ab^2.$
\end{proof}
Example \ref{compex1} shows that with two basic prox-bounded functions it is possible to obtain a threshold for the composition that is any particular nonnegative number, by making appropriate choices of $a$ and $b$. The next example shows that we can just as easily use two prox-bounded functions to construct a function that is not prox-bounded.
\begin{ex}\label{compex2}
Define $f_1(x)=x^2,$ $f_2(x)=-x^2.$ Then $f_1\circ f_2$ has threshold $\bar{r}_{12}=0,$ while $f_2\circ f_1$ is not prox-bounded.
\end{ex}
\begin{proof}
Since $(f_1\circ f_2)(x)=x^4$ is bounded below, by Fact \ref{fact1} we have that $\bar{r}_{12}=0.$ Since $(f_2\circ f_1)(x)=-x^4,$ by Fact \ref{proxfact1}(ii), $f_2\circ f_1$ is not prox-bounded.
\end{proof}
Furthermore, one can compose two functions that are not prox-bounded to form a function that is prox-bounded. For instance, $f_1(x)=-x^3$ and $f_2(x)=\ln x$ are not prox-bounded, yet they yield the composition $(f_1\circ f_2)(x)=-\ln^3x,$ which is minorized by $-x^2$ and thus prox-bounded by Fact \ref{proxfact1}(ii). So what can we say about the thresholds of the composition of prox-bounded functions? As in the case of the sum of prox-bounded functions, if we restrict ourselves to certain classes of functions, we can make some conclusions. We start by listing a known fact that is used in the proof of the subsequent proposition.
\begin{fact}\emph{\cite[Lemma 2.4]{proxave}}\label{fact:lambda}
Let $f:\R^n\to\OR$ be proper, lsc and prox-bounded with threshold $\bar{r}$. Then for any $\lambda\geq0$, $\lambda f$ is prox-bounded with threshold $\lambda\bar{r}$.
\end{fact}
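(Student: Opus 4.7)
The plan is to reduce the problem to a direct algebraic manipulation of the defining infimum in the Moreau envelope, since the scaling by $\lambda$ interacts very cleanly with the quadratic term. First I would dispose of the trivial case $\lambda=0$: then $\lambda f\equiv 0$ is bounded below, so by Fact \ref{fact1} its threshold is $0=\lambda\bar{r}$.

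For $\lambda>0$, the key observation is that for every $r\geq 0$ and every $x\in\R^n$,
\begin{align*}
e_r(\lambda f)(x)&=\inf\limits_{y\in\R^n}\left\{\lambda f(y)+\frac{r}{2}\|y-x\|^2\right\}\\
&=\lambda\inf\limits_{y\in\R^n}\left\{f(y)+\frac{r/\lambda}{2}\|y-x\|^2\right\}=\lambda\,e_{r/\lambda}f(x).
\end{align*}
Thus $e_r(\lambda f)(x)>-\infty$ at some $x$ if and only if $e_{r/\lambda}f(x)>-\infty$ at the same $x$, and similarly $e_r(\lambda f)(x)=-\infty$ for all $x$ exactly when $e_{r/\lambda}f(x)=-\infty$ for all $x$.

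Denote the threshold of $\lambda f$ by $\tilde{r}$. If $r>\lambda\bar{r}$, then $r/\lambda>\bar{r}$, so by definition of $\bar{r}$ there exists $x$ with $e_{r/\lambda}f(x)\in\R$; by the displayed identity, $e_r(\lambda f)(x)\in\R$, hence $\tilde{r}\leq\lambda\bar{r}$. Conversely, if $r<\lambda\bar{r}$, then $r/\lambda<\bar{r}$, so $e_{r/\lambda}f(x)=-\infty$ for every $x$, and therefore $e_r(\lambda f)(x)=-\infty$ for every $x$, giving $\tilde{r}\geq\lambda\bar{r}$. Combining these, $\tilde{r}=\lambda\bar{r}$.

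There is no real obstacle in this proof; the only subtle point is remembering that dividing the prox-parameter $r$ by $\lambda$ inside the infimum is valid only when $\lambda>0$, which is why the case $\lambda=0$ is handled separately at the start.
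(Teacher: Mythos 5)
Your proof is correct; the paper itself gives no proof of this statement but imports it as a fact from \cite[Lemma 2.4]{proxave}, and your argument via the scaling identity $e_r(\lambda f)=\lambda\, e_{r/\lambda}f$ together with the two-sided comparison of $r$ with $\lambda\bar r$ is exactly the standard one. The only cosmetic point is in the case $\lambda=0$ with extended-real-valued $f$: under the usual convention $0\cdot(+\infty)=+\infty$ the function $\lambda f$ is $\iota_{\dom f}$ rather than the identically zero function, but it is still bounded below, so the conclusion that its threshold is $0=\lambda\bar r$ is unaffected.
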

\begin{prop}
Let $f_1:\R^m\to\OR$ and $f_2:\R^n\to\R^m$ be prox-bounded with respective thresholds $r_1,r_2.$ Let $\ran f_2\subseteq\dom f_1$, define $f=f_1\circ f_2$ and denote the prox-threshold of $f$ as $\bar{r}$ when it exists. Then the following hold.
\begin{itemize}
\item[\rm(i)]If $f_1,f_2$ are Lipschitz continuous, then $\bar{r}=0$.
\item[\rm(ii)]If $f_1$ is affine: $f_1(x)=ax+b$ with $a\geq0$, then $\bar{r}=ar_2$.
\item[\rm(iii)]If $f_2$ is affine: $f_2(x)=ax+b$, then $\bar{r}=a^2r_1$.
\end{itemize}
\end{prop}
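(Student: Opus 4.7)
The plan is to handle each item by pairing a short reduction with a result already established in the paper, with part (iii) requiring the most computational care.

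For part (i), when $f_1$ is globally $L_1$-Lipschitz and $f_2$ is globally $L_2$-Lipschitz, the chain estimate $|f(y)-f(x)| \leq L_1\|f_2(y)-f_2(x)\| \leq L_1 L_2\|y-x\|$ shows that $f = f_1 \circ f_2$ is globally $(L_1 L_2)$-Lipschitz on $\R^n$. Proposition \ref{prop0} then immediately delivers $\bar r = 0$.

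For part (ii), the affine form of $f_1$ turns the composition into $f = a f_2 + b$. Fact \ref{fact:lambda} scales the threshold of $f_2$ by the factor $a$, so $a f_2$ is prox-bounded with threshold $a r_2$. Since $b$ is an affine function (in fact, a constant), Proposition \ref{affineprop} tells us that adding it does not change the threshold. Combining these yields $\bar r = a r_2$.

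For part (iii), write $f(x) = f_1(ax + b)$. The degenerate case $a = 0$ is handled first: then $f \equiv f_1(b)$ is constant, so $\bar r = 0 = a^2 r_1$. For $a \neq 0$, I plan to perform the change of variable $z = a y + b$ in the infimum defining $e_r f(x)$. A short calculation, using $(y-x)^2 = a^{-2}(z-(ax+b))^2$, produces the identity
\[
e_r f(x) = e_{r/a^2} f_1(ax + b).
\]
Since $x \mapsto ax+b$ is a bijection of $\R$ onto $\R$, there exists a point where $e_r f$ is finite if and only if there exists a point where $e_{r/a^2} f_1$ is finite. Thus $r > \bar r$ is equivalent to $r/a^2 > r_1$, giving $\bar r = a^2 r_1$.

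The main obstacle is the two-sided threshold correspondence in (iii). The change-of-variable identity makes both directions essentially simultaneous, but one must verify carefully that the infimum is preserved by the bijection and that the correspondence $r \leftrightarrow r/a^2$ transfers strict inequalities on both sides in order to establish equality (not just an inequality) of $\bar r$ with $a^2 r_1$. By contrast, parts (i) and (ii) are essentially direct consequences of the invoked results once the composition is rewritten in a convenient form.
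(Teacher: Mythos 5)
Your arguments for (i) and (ii) coincide with the paper's: the Lipschitz chain estimate followed by Proposition \ref{prop0}, and the rewriting $f_1\circ f_2=af_2+b$ followed by Fact \ref{fact:lambda} together with invariance of the threshold under a constant shift. For (iii), however, you take a genuinely different route. The paper works through the characterization in Fact \ref{proxfact1}(iii): for $r>r_1$ it substitutes $ax+b$ into the bounded-below inequality for $f_1+\frac{r}{2}\|\cdot\|^2$, deduces that $f_1(ax+b)+ar\langle x,b\rangle+\frac{a^2r}{2}\|x\|^2$ is bounded below, argues the converse for $r<r_1$, and finally strips off the affine term $ar\langle\cdot,b\rangle$ via Proposition \ref{affineprop}. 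You instead change variables $z=ay+b$ directly in the infimum defining $e_rf(x)$ to obtain the identity $e_rf(x)=e_{r/a^2}f_1(ax+b)$, and read off the threshold from the bijectivity of $x\mapsto ax+b$. Your version is more self-contained --- it needs neither Proposition \ref{affineprop} nor the $r$-dependent affine correction term, which is a slightly awkward feature of the paper's argument --- and it explicitly covers the degenerate case $a=0$, where the substitution is no longer surjective and the paper's ``identical argument'' for $r<r_1$ would not apply; the paper's version, in exchange, stays entirely within the bounded-below/conjugacy framework used throughout Section \ref{sec:calculus}. One cosmetic caution: your displayed computation is written for $n=1$ (you use $(y-x)^2$), but in the stated generality the same substitution works verbatim with $\|y-x\|^2=a^{-2}\|z-(ax+b)\|^2$, since $a$ is a scalar.
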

\begin{proof}
(i) Let $f_1$ be $K_1$-Lipschitz and $f_2$ be $K_2$-Lipschitz. Then
$$\|f_1(f_2(y))-f_1(f_2(x))\|\leq K_1\|f_2(y)-f_2(x)\|\leq K_1K_2\|y-x\|,$$which says that $f_1\circ f_2$ is $K_1K_2$-Lipschitz. By Proposition \ref{prop0}, $\bar{r}=0$.\medskip\\
(ii) We have $(f_1\circ f_2)(x)=af_2(x)+b$, which yields $\bar{r}=ar_2$ by Fact \ref{fact:lambda} and the fact that the vertical shift by $b$ has no impact on the threshold.\medskip\\
(iii) Let $r>r_1$. Then by Fact \ref{proxfact1}(iii), $f_1+\frac{r}{2}\|\cdot\|^2$ is bounded below. We have, for some $m\in\R$ and for all $x\in\dom f_1$,
\begin{align}
f_1(x)+\frac{r}{2}\|x\|^2&\geq m,\nonumber\\
f_1(ax+b)+\frac{r}{2}\|ax+b\|^2&\geq m,\nonumber\\
f_1(ax+b)+ar\langle x,b\rangle+\frac{a^2r}{2}\|x\|^2&\geq m-\frac{r}{2}\|b\|^2.\label{eq:affinecomp1}
\end{align}
Hence, $f_1(ax+b)+ar\langle x,b\rangle+\frac{a^2r}{2}\|x\|^2$ is bounded below. Since \eqref{eq:affinecomp1} is true for any arbitrary $r>r_1$, it is true for all $r>r_1$. By an identical argument, for any $r<r_1$ we have that $f_1(ax+b)+ar\langle x,b\rangle+\frac{a^2r}{2}\|x\|^2$ is not bounded below. Thus, $r_1$ is the infimum of all $r$ such that \eqref{eq:affinecomp1} is true. By Fact \ref{proxfact1}, the threshold of $f_1(a\cdot+b)+ar\langle\cdot,b\rangle$ is $a^2r_1$. By Proposition \ref{affineprop}, we conclude that the threshold of $f_1\circ f_2$ is $a^2r_1$.
\end{proof}
So we have that if both $f_1,f_2$ are Lipschitz continuous functions, or if one of $f_1,f_2$ is affine, then the threshold of the composition can be determined exactly. It is clear from Examples \ref{compex1} and \ref{compex2} that if one of $f_1,f_2$ is quadratic, chaos ensues. So far, it does not seem that other standard properties such as convexity and boundedness are any more promising in forming composition rules, not even in providing an upper bound for the threshold. We leave the further development of properties of the threshold of prox-boundedness to future consideration.

\section{Conclusion and future work}\label{sec:conc}

The threshold of prox-boundedness of the objective function of a minimization problem is an important value to take into consideration when implementing optimization algorithms. In this work, we have determined the threshold of Lipschitz functions and bounds on the threshold of piecewise functions. We established properties of thresholds of the sum and the composition of functions under certain conditions and shown that when we do not have these conditions, functions can be constructed so that the threshold of the sum or composition is any nonnegative number.\par
This paper is the first step in determining thresholds for larger classes of functions, with the long-term goal of improving the efficiency of optimization routines that are based in the proximal point algorithm. At the moment, the conditions imposed are quite heavy; the search continues for other well-behaved functions whose thresholds can be identified or at least bounded. The work done here regarding piecewise functions, together with the results of \cite{hare2014thresholds} on PLQ functions, should open the way for exploration of thresholds of composition classes such as the fully subamenable functions of \cite{mohammadi2019variational}. Such functions are an extension of fully amenable functions \cite{rockwets}, respect a chain rule and are likely suitable for use in constrained composite modelling and optimization applications.

\bibliographystyle{plain}
\bibliography{Bibliography}{}
\end{document}